\newtheorem*{question}{\bf Question}
\newtheorem{theorem}{\bf Theorem}[section]
\newtheorem{definition}[theorem]{\bf Definition}
\newtheorem{lemma}[theorem]{\bf Lemma}
\newtheorem{corollary}[theorem]{\bf Corollary}
\theoremstyle{remark}
\newtheorem{example}{\bf Example}
\DeclareMathOperator*\lowlim{\underline{lim}}
\DeclareMathOperator*\uplim{\overline{lim}}
\definecolor{green}{rgb}{0,0.5,0}
\definecolor{dkgreen}{rgb}{0,0.6,0}
\definecolor{gray}{rgb}{0.5,0.5,0.5}
\definecolor{mauve}{rgb}{0.58,0,0.82}
\scriptsize\color{black},  
\def\and{{\quad\text{and}\quad}}
\title{Geometric limits of Julia sets for sums of power maps and polynomials}
\author{Micah Brame}
\address{Butler University, 4600 Sunset Ave., Indianapolis, IN 46208, USA}
\email{mbrame@butler.edu}
\author{Scott Kaschner}
\address{Butler University, 4600 Sunset Ave., Indianapolis, IN 46208, USA}
\email{skaschne@butler.edu}
\begin{document}

\begin{abstract}
For maps of one complex variable, $f$, given as the sum of a degree $n$ power map and a degree $d$ polynomial, we provide necessary and sufficient conditions that the geometric limit as $n$ approaches infinity of the set of points that remain bounded under iteration by $f$ is the closed unit disk or the unit circle.  We also provide a general description, for many cases, of the limiting set. 
\end{abstract}

\maketitle

\section{Introduction}

Let $q$ be a degree $d$ polynomial; define $f_{n}\colon\mathbb C\rightarrow\mathbb C$ by
\[f_{n}(z)\ =\ z^n+q(z),\]
and note that $f_{n}$ is the sum of a power map (whose power we increase in the limit) and a fixed degree $d$ polynomial, $q$.  
For a map $f\colon\mathbb C\rightarrow\mathbb C$, the filled Julia set for $f$, $K(f)$, is the set of points that remain bounded under iteration by $f$. We use the notation $S_0=\{z\in\mathbb C\colon|z|=1\}$ for the unit circle and $\overline{\mathbb D}=\{z\in\mathbb C\colon|z|\leq1\}$ for the closed unit disk.  The purpose of this study is to describe the limit of $K(f_{n})$ in the Hausdorff topology as $n\rightarrow\infty$.

This work was inspired the 2012 study by Boyd and Schulz \cite{boyd} that included a result for the family $f_{n}$ with $\deg q=0$; that is, $q(z)=c\in\mathbb C$.  Among many other things, they proved
\begin{theorem}[Boyd-Shulz, 2012]
\label{THM:BS} If $q(z)=c$, then under the Hausdorff metric,
\begin{align*}
\mbox{for any $|c|<1$, }&\lim_{n\rightarrow\infty}K(f_{n})=\overline{\mathbb D};\\
\mbox{for any $|c|>1$, }&\lim_{n\rightarrow\infty}K(f_{n})=S_0.
\end{align*}
\end{theorem}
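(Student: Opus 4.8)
The plan is to establish matching outer and inner bounds for the Hausdorff limit, following the argument of \cite{boyd} in a streamlined form; the only analytic input needed is the crude estimate $|z|^{n}-|c|\le |f_{n}(z)|\le |z|^{n}+|c|$. The first ingredient is an a priori escape radius: for every $r>1$ there is an $N=N(r)$ so that for all $n\ge N$, every $z$ with $|z|\ge r$ has $f_{n}^{\,k}(z)\to\infty$, and hence $K(f_{n})\subseteq\{|z|\le r\}$ for $n\ge N$. Indeed, on $\{t\ge r\}$ the real map $t\mapsto t^{n}-|c|$ exceeds $t$ by at least $\beta:=r^{n}-r-|c|$, which is positive once $n$ is large, so along any orbit with $|z|\ge r$ the moduli grow by at least $\beta$ at each step and diverge. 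Letting $r\downarrow 1$, this already shows that every subsequential limit of points chosen from the sets $K(f_{n})$ lies in $\overline{\mathbb D}$, in both cases $|c|<1$ and $|c|>1$.

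Consider first $|c|<1$. Choosing $r$ with $|c|<r<1$, the upper estimate gives $|f_{n}(z)|\le r^{n}+|c|<r$ on $\{|z|\le r\}$ as soon as $r^{n}<r-|c|$, i.e.\ for $n$ large; thus $\{|z|\le r\}$ is forward invariant under $f_{n}$ and therefore contained in $K(f_{n})$. Hence every point of $\{|z|\le r\}$ is a limit of points drawn from the $K(f_{n})$, and letting $r\uparrow 1$ the same holds for every point of $\overline{\mathbb D}$. Combined with the escape radius, this gives $K(f_{n})\to\overline{\mathbb D}$.

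Now suppose $|c|>1$, and fix $\epsilon\in\bigl(0,\tfrac{|c|-1}{2}\bigr)$. By the escape radius, $K(f_{n})\subseteq\{|z|\le 1+\epsilon\}$ for $n$ large. If $|z|\le 1-\epsilon$, then $|f_{n}(z)|\ge |c|-(1-\epsilon)^{n}>1+\epsilon$ for $n$ large, so $f_{n}(z)\notin K(f_{n})$ and, by forward invariance, $z\notin K(f_{n})$; hence $K(f_{n})$ is squeezed into the annulus $\{1-\epsilon\le|z|\le 1+\epsilon\}$, so every subsequential limit of points from the $K(f_{n})$ lies on $S_{0}$. For the reverse inclusion I would use complete invariance of $K(f_{n})$ under $z\mapsto z^{n}+c$: take any $w_{n}\in K(f_{n})$ --- for instance a fixed point of $f_{n}$, which exists because $\deg f_{n}=n\ge 2$. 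Then $|w_{n}|\le 1+\epsilon<|c|$, so $|w_{n}-c|$ lies between two positive constants independent of $n$, and in particular $w_{n}\ne c$. The $n$ preimages $f_{n}^{-1}(w_{n})$ are the $n$-th roots of $w_{n}-c$; they lie on the circle $\{|z|=|w_{n}-c|^{1/n}\}$, whose radius tends to $1$, equally spaced at angular gap $2\pi/n\to 0$. So for $n$ large these preimages form an arbitrarily fine net of $S_{0}$ contained in $K(f_{n})$, which shows every point of $S_{0}$ is a limit of points from the $K(f_{n})$, and hence $K(f_{n})\to S_{0}$.

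The step I expect to be the genuine obstacle is the last one: showing, when $|c|>1$, that $K(f_{n})$ does not collapse onto a proper subarc of the circle. The preimage--equidistribution argument settles it cleanly, but it rests on two facts that deserve care: that $K(f_{n})$ is nonempty (standard for polynomials of degree at least two), and that a chosen $w_{n}\in K(f_{n})$ has $|w_{n}-c|$ bounded away from $0$ and $\infty$ --- which is exactly where the a priori containment $K(f_{n})\subseteq\{|z|\le 1+\epsilon\}$ and the hypothesis $|c|>1$ are used. Beyond that the remaining work is bookkeeping: arranging that the finitely many ``for $n$ sufficiently large'' thresholds appearing in the three steps can be met simultaneously.
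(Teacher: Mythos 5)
Your proof is correct, and it is worth noting that the paper itself does not prove this statement: Theorem \ref{THM:BS} is quoted from Boyd and Schulz \cite{boyd}, so the only comparison available is with that source and with the general machinery the paper builds for arbitrary $q$. Your escape-radius step is exactly the specialization of Lemma \ref{LEM:DISKBOUND} to constant $q$, and your forward-invariant disk $\{|z|\le r\}$, $|c|<r<1$, handles the $|c|<1$ case the same way one would in general. Where your route genuinely diverges from the paper's general strategy is the lower bound in the $|c|>1$ case: the paper gets $S_0\subset\lowlim K(f_n)$ via equidistribution of the fixed points of $f_n$ near $S_0$ (the potential-theoretic Lemma \ref{LEM:POT}) together with backward invariance (Lemma \ref{LEM:CIRCPREIM}), whereas you exploit the fact that $q$ is constant, so that the full preimage of a single point $w_n\in K(f_n)$ consists of the exact $n$-th roots of $w_n-c$, which lie equally spaced on a circle of radius $|w_n-c|^{1/n}\to 1$. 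Your bookkeeping for this step is sound: $K(f_n)\neq\emptyset$ because a fixed point exists and has bounded orbit, the containment $K(f_n)\subset\overline{\mathbb D}_{1+\epsilon}$ with $1+\epsilon<|c|$ pins $|w_n-c|$ between positive constants independent of $n$, and complete invariance places the roots in $K(f_n)$, giving a net of $S_0$ of mesh $O(1/n)+|1-\rho_n|$. The trade-off is the expected one: your argument is entirely elementary (no subharmonic-function compactness or Poincar\'e--Lelong input) but is tied to $\deg q=0$; the paper's Lemma \ref{LEM:POT} is what replaces your explicit root computation once $q$ is a nonconstant polynomial and preimages are no longer roots of a binomial.
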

It comes as little surprise that this phenomena is easily disrupted.  It was shown in \cite{krs} that when $q(z)=c$ with $|c|=1$, the limiting behavior of $K(f_{n})$ depends on number-theoretic properties of $c$ and the limit almost always fails to exist.  Another study by Alves \cite{alves}, has shown that for maps of the form $f_{n,c}(z)=z^n+cz^k$ for a fixed positive integer $k$, if $|c|<1$, then the limit of $K(f_{n,c})$ as $n\rightarrow\infty$ is $S_0$.

Returning to the more general case in which $q$ is any polynomial, the limiting behavior of $K(f_n)$ is substantially more interesting.  See Figure \ref{FIG:PICS} for examples of filled Julia sets for $f_{n}$, where $q(z)=z^2+c$ and $|c|<1$, that very clearly fail to limit to either the closed unit disk or the unit circle.  The color gradation in the pictures indicates the number of iterates required to exceed a fixed bound for modulus.

Some results from the $\deg q=0$ cases still hold. If $|z|>1$, we can still expect the image of $z$ under $f_{n}$ to have large modulus for large enough $n$.  Guided by this intuition, we find the following generalization of a lemma from \cite{boyd}. We omit the proof, as it is similar to \cite{boyd}, and adopt the notation
\[\mathbb D_r=\{z\in\mathbb C\colon|z|<r\}\mbox{ and }\overline{\mathbb D}_r=\{z\in\mathbb C\colon|z|\leq r\}.\]
\begin{lemma}
\label{LEM:DISKBOUND}
For any polynomial $q$ and any $\epsilon>0$, there is an $N\geq 2$ such that for all $n\geq N$, 
\[K(f_{n})\subset \mathbb D_{1+\epsilon}.\] 
\end{lemma}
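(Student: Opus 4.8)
The plan is to show that every point of modulus at least $1+\epsilon$ is driven off to infinity by iteration of $f_n$ as soon as $n$ is large, with all estimates uniform in the starting point. Concretely, I would look for an $N$ and a factor $\lambda>1$ such that $|f_n(z)|\geq\lambda|z|$ for all $n\geq N$ and all $|z|\geq 1+\epsilon$. Once that is in hand, applying it repeatedly along an orbit shows that the orbit of any such point is unbounded, hence $z\notin K(f_n)$, and therefore $K(f_n)\subset\mathbb D_{1+\epsilon}$.

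For the one-step estimate, write $q(z)=\sum_{i=0}^{d}a_i z^i$ and set $A=\sum_{i=0}^{d}|a_i|$, so that $|q(z)|\leq A|z|^{d}$ whenever $|z|\geq 1$. For $|z|=r\geq 1+\epsilon$ the reverse triangle inequality gives $|f_n(z)|\geq r^{n}-Ar^{d}=r^{n}\bigl(1-Ar^{d-n}\bigr)$. For $n>d$ the term $Ar^{d-n}$ is decreasing in $r$ on $[1+\epsilon,\infty)$, hence bounded above by $A(1+\epsilon)^{d-n}$, which tends to $0$ as $n\to\infty$. So I would fix $N$ with $N\geq 2$, $N>d$, $A(1+\epsilon)^{d-N}\leq \tfrac12$, and $(1+\epsilon)^{N-1}\geq 4$ (all of these hold once $N$ is taken large enough). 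Then for $n\geq N$ and $r\geq 1+\epsilon$ one gets $|f_n(z)|\geq\tfrac12 r^{n}\geq\tfrac12 r\,(1+\epsilon)^{n-1}\geq 2r$, so the estimate holds with $\lambda=2$. Since moreover $|f_n(z)|\geq 2(1+\epsilon)>1+\epsilon$, the bound applies again to $f_n(z)$, and by induction $|f_n^{\,k}(z)|\geq 2^{k}|z|\to\infty$.

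Conclude: for every $z$ with $|z|\geq 1+\epsilon$ the orbit under $f_n$ is unbounded, so $z\notin K(f_n)$; hence $K(f_n)\subset\mathbb D_{1+\epsilon}$ for all $n\geq N$. Enlarging $N$ if necessary ensures $N\geq 2$ as claimed.

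I do not anticipate a serious obstacle: the content is simply that $z^{n}$ overwhelms the fixed polynomial $q(z)$ outside the unit disk once $n$ is large. The one point deserving a moment's care is that $\{|z|\geq 1+\epsilon\}$ is unbounded, so the comparison between $r^{n}$ and $Ar^{d}$ must be uniform in $r$; this is immediate since, for $n>d$, the worst case of the error term $Ar^{d-n}$ occurs at $r=1+\epsilon$. The degenerate case $\deg q=0$ is already subsumed by Theorem~\ref{THM:BS}, and it is also covered by the computation above with $d=0$. This is exactly the argument of \cite{boyd}, which is why only a sketch is warranted.
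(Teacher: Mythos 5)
Your proposal is correct and follows essentially the same route as the paper: bound $|q(z)|\leq A|z|^{d}$ for $|z|\geq 1$, use the reverse triangle inequality so that $z^{n}$ dominates outside $\overline{\mathbb D}$, and conclude geometric escape of the orbit for all $n$ large. The only cosmetic difference is that you package the estimate as a uniform one-step expansion $|f_{n}(z)|\geq 2|z|$ on $\{|z|\geq 1+\epsilon\}$ (making the uniformity in $|z|$ explicit), whereas the paper runs an induction showing $|f_{n}^{m}(z)|\geq B^{m}$; both yield the same conclusion.
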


\begin{figure}[h]
\scalebox{.35}{\includegraphics{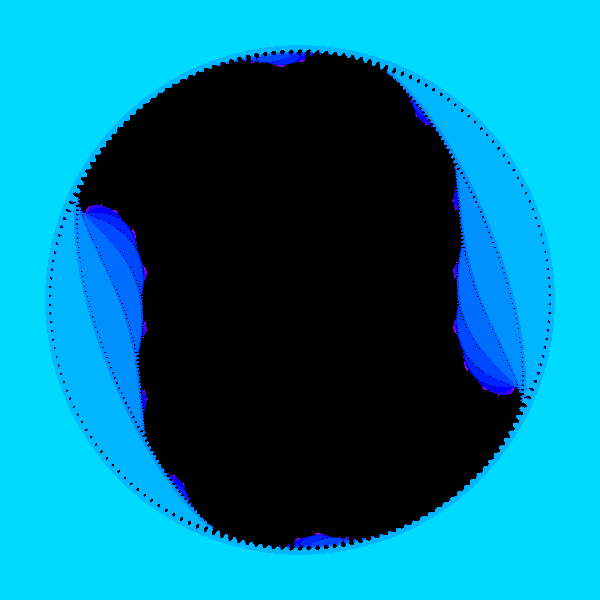}}\hspace{24pt}
\scalebox{.35}{\includegraphics{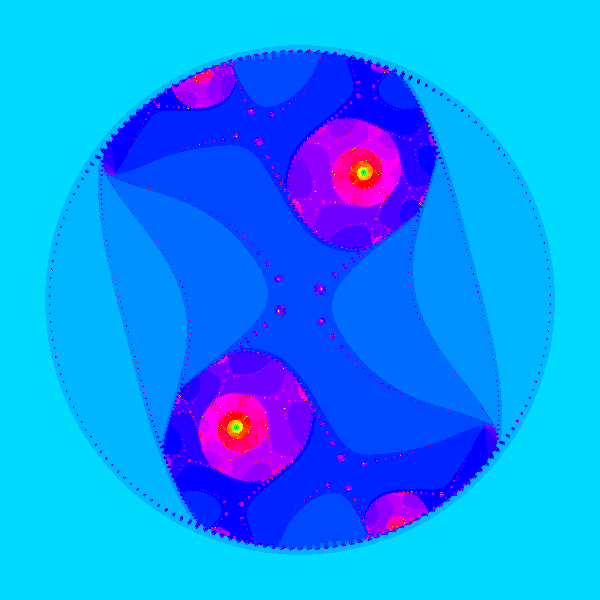}}
\caption{$K(f_{200})$ with $q_1(z)=z^2+0.25+0.25i$ (left) and $q_2(z)=z^2+0.45+0.25i$ (right)}
\label{FIG:PICS}
\end{figure}

Some of the dependence of the limiting behavior of $K(f_{n})$ on $q$ is obvious; by Lemma \ref{LEM:DISKBOUND}, one should expect any point whose orbit by $q$ leaves the unit disk to not be in $K(f_{n})$ for all $n$ sufficiently large.  Thus, one might expect $\uplim_{n\rightarrow\infty}K(f_{n})$ to be contained in the closure of the set
\[\{z \in\mathbb C\colon|q^k(z)|< 1\mbox{ for all }k\}.\]
However, this is not quite the case.  One can prove, as in the $\deg q=0$ cases, that $ S_0$ is always a subset of the $\lowlim_{n\rightarrow\infty}K(f_{n})$. 

Evidence for this fact (proved in Section \ref{SEC:PROOF}) comes by noting that when $n$ is much larger than the degree of $q$, the $n$ fixed points of $f_{n}$ are roughly equidistributed around the unit circle.  This result is connected to the work of Erd\"os, Turan, et al.~\cite{erdos,hughes,iz} on distribution of zeros for sequences of complex polynomials.

By invariance properties of the filled Julia set, one should then expect the preimages of $S_0$ by $q$ (that still have modulus less than or equal to one) to be contained in $\lowlim_{n\rightarrow\infty}K(f_{n})$. The basins of the fixed points accumulating on $S_0$ and their preimages (appearing as small black spots) can be seen in Figure \ref{FIG:PICS}. These ideas and the preceding lemmas lead to the next definition and theorem.

\begin{definition}
Let 
\begin{equation*}
K_{\infty}:=K_q\cup\bigcup_{j\geq0}S_j,
\end{equation*}
where  $K_q:=\{z\in\mathbb C\colon |q^k(z)|<1\mbox{ for all }k\}$, $S_0$ is the unit circle $\{z\in\mathbb C\colon|z|=1\}$, and for any integer $j\geq0$,
\[S_{j}:=\{z\in\mathbb C\colon |q^j(z)|=1\mbox{ and }|q^i(z)|<1\mbox{ for all }0\leq i<j\}.\]
\end{definition}
$K_{\infty}$ is the set of points in $K(q)$ whose orbits by $q$ remain in $\mathbb D$ (the set $K_q$), the unit circle ($S_0$), and the parts of the iterated preimages of $S_0$ that remain in $\overline{\mathbb D}$ at each step (the sets $S_j$ for $j\geq1$).  See Figure \ref{FIG:KINF} for an example of $K(f_{n})$ with $q(z)=z^2-0.1+0.75i$ and several different values of $n$ compared to a sketch of $K_{\infty}$ for this polynomial $q$.

\begin{figure}[h]
\centering
\includegraphics[width=0.244\textwidth]{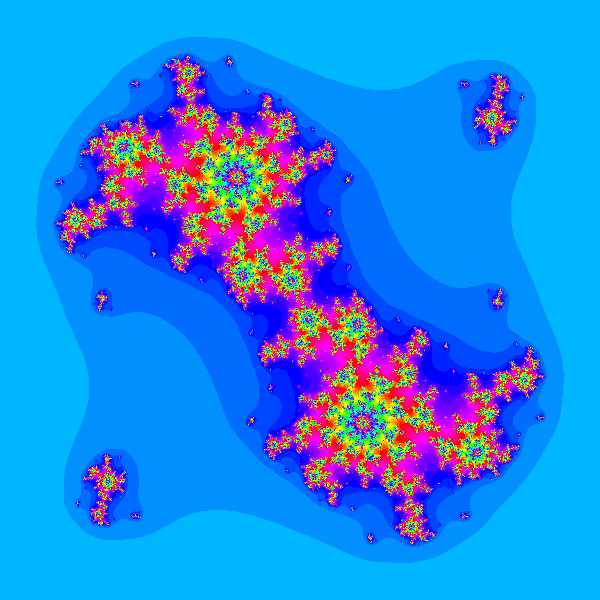}
\includegraphics[width=0.244\textwidth]{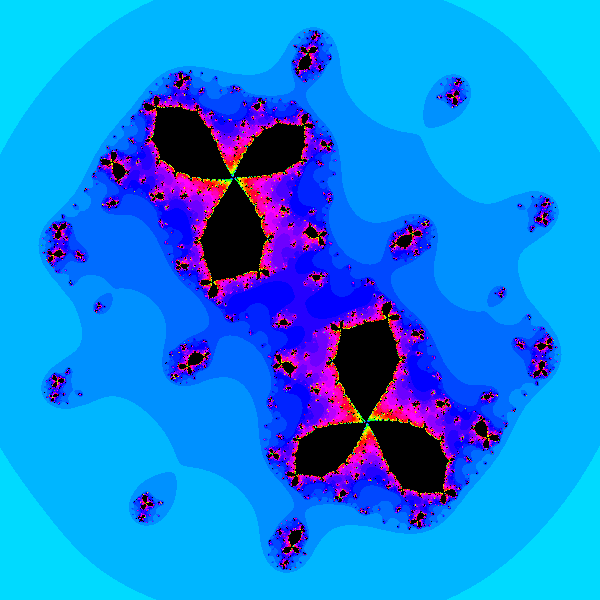}
\includegraphics[width=0.244\textwidth]{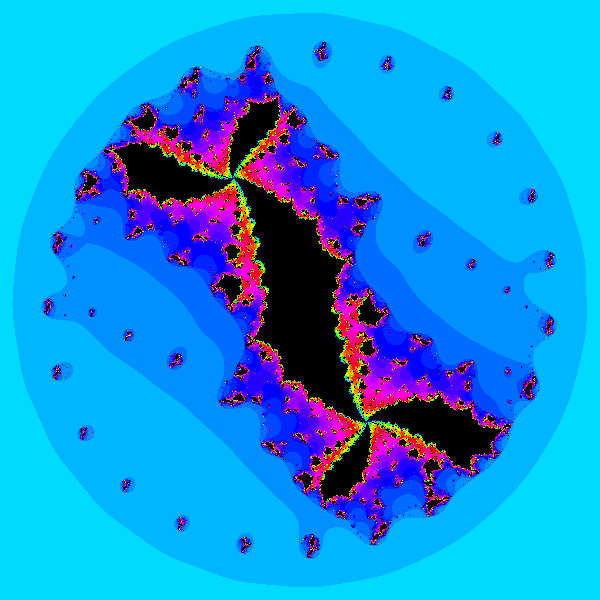}
\includegraphics[width=0.244\textwidth]{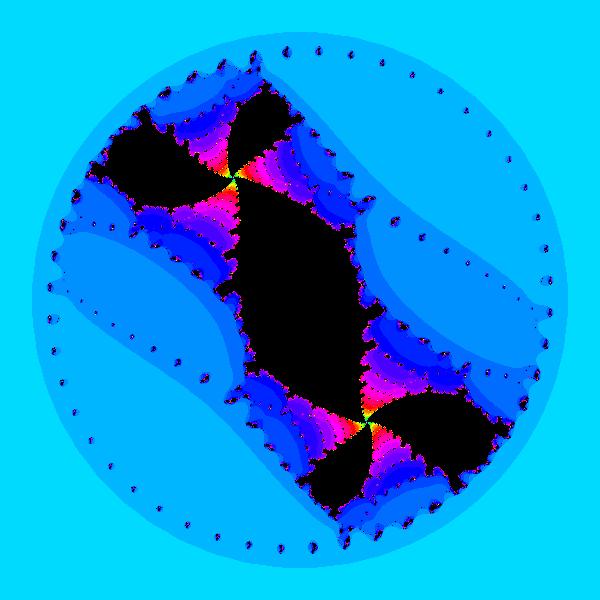}\\[6pt]
\includegraphics[width=0.48\textwidth]{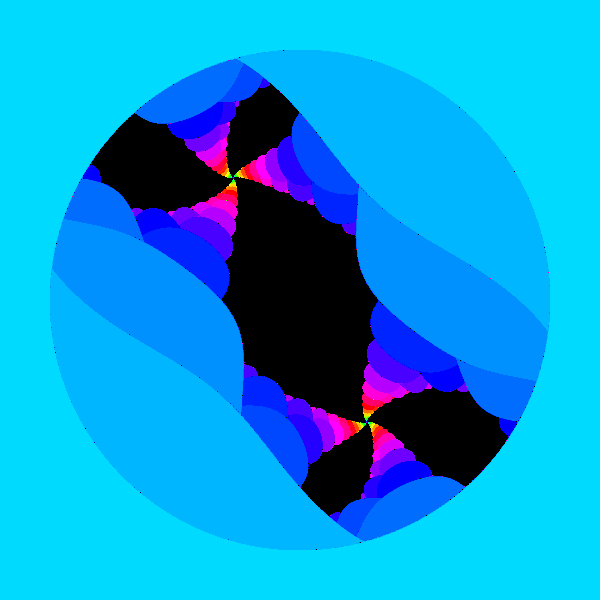}
\scalebox{0.389}{
\begin{picture}(0,0)%
\includegraphics{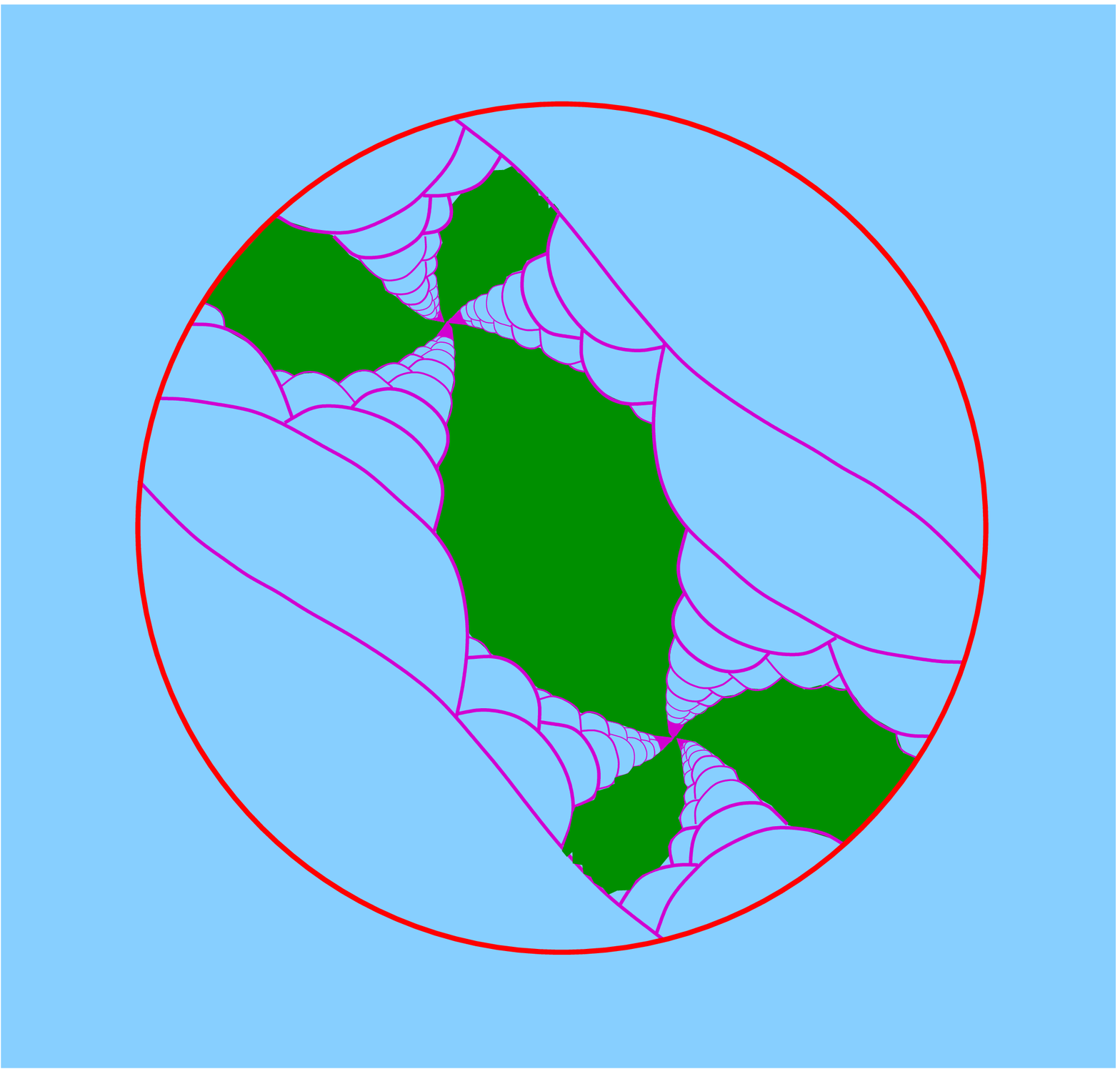}%
\end{picture}%
\setlength{\unitlength}{3947sp}%
\begingroup\makeatletter\ifx\SetFigFont\undefined%
\gdef\SetFigFont#1#2#3#4#5{%
  \reset@font\fontsize{#1}{#2pt}%
  \fontfamily{#3}\fontseries{#4}\fontshape{#5}%
  \selectfont}%
\fi\endgroup%
\begin{picture}(10092,9624)(1448,-10947)
\end{picture}%
}
\caption{Top, left to right: $K(f_{n})$ with $q(z)=z^2-0.1+0.75i$ and $n=6,12,25,50$. 
Bottom left: $K(f_{1800})$. 
Bottom right: Sketch of $K_{\infty}$, where $K_q$ is green, $S_0$ is red, and the sets $S_j$ are magenta.}
\label{FIG:KINF}
\end{figure}

By the construction of $K_{\infty}$, any point bounded a definite distance away from $K_{\infty}$ will eventually be mapped a definite distance outside the unit disk.  Then Lemma \ref{LEM:DISKBOUND} implies such a point must be in the basin of infinity for all $f_n$ with large enough $n$, so we have the following theorem.
\begin{theorem}
\label{THM:MAIN}
For any polynomial $q$,
\[K_{\infty}:=K_q\cup\bigcup_{j=0}^{\infty}S_j\supset\uplim_{n\rightarrow\infty}K(f_n)\supset\lowlim_{n\rightarrow\infty}K(f_n)\supset\partial K_q\cup\bigcup_{j=0}^{\infty}S_j.\]
\end{theorem}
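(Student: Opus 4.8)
The chain comprises three inclusions. The middle one, $\uplim_{n}K(f_n)\supset\lowlim_{n}K(f_n)$, holds for every sequence of sets and needs no argument, so the content lies in the two outer inclusions, which I would treat separately. For the upper inclusion $K_{\infty}\supset\uplim_{n}K(f_n)$ the plan is to argue by contraposition: fix $z_0\notin K_{\infty}$ and produce a neighborhood $U$ of $z_0$ with $U\cap K(f_n)=\emptyset$ for all large $n$. Since $z_0\notin K_q$ there is a least index $k_0\ge0$ with $|q^{k_0}(z_0)|\ge1$, and since $z_0$ lies in none of the sets $S_j$ this inequality is strict, $|q^{k_0}(z_0)|>1$, while $|q^{i}(z_0)|<1$ for $i<k_0$. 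The one analytic input needed is that $z\mapsto z^{n}$ tends to $0$ uniformly on compact subsets of $\mathbb D$; writing $f_n=q+z^{n}$ and using that $q$ is Lipschitz on a fixed large ball containing the finite orbit $z_0,q(z_0),\dots,q^{k_0}(z_0)$, an induction on $i\le k_0$ shows $f_n^{\,i}(w)\to q^{i}(w)$ uniformly for $w$ in a small neighborhood $U$ of $z_0$ (the intermediate iterates stay well inside $\overline{\mathbb D}$, where the $z^{n}$-term is negligible). Hence, after shrinking $U$ and taking $n$ large, $|f_n^{\,k_0}(w)|>1+\delta$ for all $w\in U$ and some fixed $\delta>0$; Lemma \ref{LEM:DISKBOUND} applied with $\epsilon=\delta$ gives $K(f_n)\subset\mathbb D_{1+\delta}$, so $f_n^{\,k_0}(w)\notin K(f_n)$, hence $w\notin K(f_n)$. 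Thus $z_0\notin\uplim_{n}K(f_n)$.

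For the lower inclusion I would first record that $\lowlim_{n}K(f_n)$ is closed, which reduces matters to proving (i) $\bigcup_{j\ge0}S_j\subset\lowlim_{n}K(f_n)$ and (ii) $\partial K_q\subset\overline{\bigcup_{j\ge0}S_j}$. Claim (ii) is elementary point-set topology: if $z_0\in\partial K_q$ is not already in some $S_j$, then using $z_0\in\overline{K_q}$ to exclude the possibility $|q^{k}(z_0)|>1$ at the first bad index forces $|q^{k}(z_0)|<1$ for all $k$; given $\varepsilon>0$, pick $z'\notin K_q$ with $|z'-z_0|<\varepsilon$, let $j$ be least with $|q^{j}(z')|\ge1$, and move along the segment $[z_0,z']$ to the first parameter $t^{*}$ at which one of the continuous functions $t\mapsto|q^{i}((1-t)z_0+tz')|$, $0\le i\le j$, attains the value $1$; that point lies in some $S_{j_0}$ and is within $\varepsilon$ of $z_0$.

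The main work is claim (i), beginning with the case $S_0$. The fixed points of $f_n$ are the roots of $\phi_n(z):=z^{n}+q(z)-z$, of which there are $n$ once $n>\deg q$, and all of them lie in $K(f_n)$. Comparing $\phi_n$ with $z^{n}$ on $|z|=1+\rho$, and with $q(z)-z$ on a circle $|z|=1-\rho$ missing the at most $\deg q$ fixed points of $q$, Rouch\'e's theorem shows that for large $n$ all roots of $\phi_n$ lie in $\mathbb D_{1+\rho}$ and all but a bounded number lie outside $\mathbb D_{1-\rho}$. Because the coefficients of $\phi_n$ other than the leading one are exactly those of $q(z)-z$, hence bounded independently of $n$ (when $q(0)=0$ one first factors out the appropriate power of $z$), the Erd\"os--Tur\'an theorem \cite{erdos} bounds the number of roots of $\phi_n$ with argument in any fixed sector $[\alpha,\beta)$ below by $\tfrac{\beta-\alpha}{2\pi}n-O(\sqrt n)$; combining the two estimates, every neighborhood of any $\zeta_0\in S_0$ contains a root of $\phi_n$ for all large $n$, so $\zeta_0\in\lowlim_{n}K(f_n)$. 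For $j\ge1$ and $z_0\in S_j$, put $\zeta_0=q^{j}(z_0)\in S_0$, take a fixed point $\xi_n$ of $f_n$ near $\zeta_0$ from the previous step, and — since $|q^{j-1}(z_0)|<1$, where $z\mapsto z^{n}$ is negligible — apply Rouch\'e again to $f_n(w)-\xi_n=q(w)-\xi_n+w^{n}$ near $q^{j-1}(z_0)$ to obtain $w_n^{(1)}\in f_n^{-1}(\xi_n)$ with $w_n^{(1)}\to q^{j-1}(z_0)$; iterating this pullback $j$ times along the $q$-orbit of $z_0$ and using complete invariance of the filled Julia set yields $w_n\in K(f_n)$ with $w_n\to z_0$, so $z_0\in\lowlim_{n}K(f_n)$.

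I expect the $S_0$ step to be the main obstacle: invoking Erd\"os--Tur\'an with the correct normalization (especially when $q(0)=0$) and controlling the bounded clump of ``stray'' fixed points of $f_n$ that drift toward the fixed points of $q$ inside $\mathbb D$, so as still to guarantee a fixed point of $f_n$ in a prescribed small arc-neighborhood of an arbitrary point of $S_0$. By comparison, the upper inclusion and the reduction $\partial K_q\subset\overline{\bigcup_{j}S_j}$ are soft consequences of Lemma \ref{LEM:DISKBOUND}, the uniform decay of $z\mapsto z^{n}$ on compacta in $\mathbb D$, Rouch\'e's theorem, and the closedness of $\lowlim$.
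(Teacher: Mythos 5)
Your proposal is sound and reaches the stated inclusions, but it takes a genuinely different route from the paper in two places. First, for the clustering of fixed points of $f_n$ on $S_0$ you use Rouch\'e for the radial localization plus the Erd\"os--Tur\'an theorem \cite{erdos} for the angular equidistribution of the roots of $z^n+q(z)-z$ (with the normalization issue at $q(0)=0$ handled by factoring out a power of $z$); the paper instead proves a potential-theoretic equidistribution statement (Lemma \ref{LEM:POT}): $\tfrac1n\log|f_n|\to\log_+|z|$ in $L^1_{loc}$, hence the root measures converge weakly to normalized Lebesgue measure on $S_0$. The two are interchangeable here -- the paper even remarks that Erd\"os--Tur\'an-type results would suffice -- with your version more elementary and quantitative, and the paper's more concise because it exploits the sparsity of the coefficients; your observation that fixed points lie in $K(f_n)$ outright also lets you skip the paper's repelling/attracting case split. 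Second, and more interestingly, for $\partial K_q$ the paper splits into accumulation points of $\bigcup_j S_j$ versus points of $J(q)\cap\mathbb D$, handling the latter by density of repelling periodic points together with their persistence for $f_n$ (Lemma \ref{LEM:ATTRACT}); you instead prove directly that $\partial K_q\subset\overline{\bigcup_{j\ge0}S_j}$ by the first-exit argument along a segment from a boundary point to a nearby point outside $K_q$, and then use closedness of $\lowlim$. Your segment argument is correct and shows the paper's second alternative is actually subsumed by the first, eliminating any need for Lemma \ref{LEM:ATTRACT} in this theorem (the paper still needs it for Theorem \ref{THM:HYPERBOLIC} and Corollary \ref{COR:MAIN}). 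Your remaining steps -- the contrapositive neighborhood argument for $\uplim K(f_n)\subset K_\infty$ via uniform decay of $z^n$ on compacta in $\mathbb D$ and Lemma \ref{LEM:DISKBOUND}, and the $j$-fold Rouch\'e pullback of fixed points to reach $S_j$ -- are pointwise versions of the paper's Lemmas \ref{LEM:HALFHAUS}, \ref{LEM:BOUNDED}, and \ref{LEM:CIRCPREIM}, and pointwise statements are all the Painlev\'e--Kuratowski limits require. One shared caveat, not a defect relative to the paper: both your argument and the paper's implicitly need $q(z)-z\not\equiv0$ and $q$ nonconstant for the fixed-point and pullback steps (Lemma \ref{LEM:POT} likewise requires the perturbing polynomial to be nonzero), so the degenerate cases $q(z)=z$ and $\deg q=0$ with $|q|=1$ fall outside both proofs.
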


What is happening here heuristically is that as long as the orbit of $z$ remains in $\mathbb D$, the polynomial $q(z)$ dominates the dynamics; if the orbit of $z$ leaves $\overline{\mathbb D}$, then the power map $z^n$ dominates. When the orbit hits $S_0$, it is not clear whether $q(z)$ or $z^n$ should win, so you get a point in the Julia set.  Now simple conditions that describe precisely when we can expect the closed unit disk, $\overline{\mathbb D}$, or the unit circle, $S_0$, as a limit follow from Theorem \ref{THM:MAIN}:
\begin{corollary}
\label{COR:MAIN}
Suppose $\deg q\geq2$ and $q$ has no fixed points in $S_0$.  Under the Hausdorff metric,
\begin{enumerate}
\item $\displaystyle\lim_{n\rightarrow\infty}K(f_{n})=\overline{\mathbb D}$ if and only if $q(\overline{\mathbb D})\subset\overline{\mathbb D}$, and
\item $\displaystyle\lim_{n\rightarrow\infty}K(f_{n})=S_0$ if and only if $q(\overline{\mathbb D})\cap\mathbb D=\emptyset$.
\end{enumerate}
\end{corollary}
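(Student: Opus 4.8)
The plan is to read off nearly everything from Theorem~\ref{THM:MAIN} using a few standard facts of complex analysis (the open mapping theorem, the Schwarz lemma, the minimum modulus principle) and to supply one genuinely dynamical estimate for the remaining case. Throughout, Lemma~\ref{LEM:DISKBOUND} guarantees the $K(f_n)$ eventually lie in a common compact set, so that $\lim_n K(f_n)=L$ in the Hausdorff metric is equivalent to $\uplim_n K(f_n)=\lowlim_n K(f_n)=L$. Two bookkeeping remarks pin down $K_\infty$ in the two relevant cases. First, since $q$ is nonconstant, $q(\overline{\mathbb D})\subset\overline{\mathbb D}$ forces $q(\mathbb D)\subset\mathbb D$ by the open mapping theorem, whence $K_q=\mathbb D$, every $S_j$ with $j\ge1$ is empty, $K_\infty=\overline{\mathbb D}$, and $\partial K_q=S_0$. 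Second, $q(\overline{\mathbb D})\cap\mathbb D=\emptyset$ gives $|q|\ge1$ on $\overline{\mathbb D}$, and since $q$ is nonconstant with no zeros in $\overline{\mathbb D}$ the minimum modulus principle upgrades this to $|q|>1$ on the \emph{open} disk $\mathbb D$; hence $K_q=\emptyset$, every $S_j$ with $j\ge1$ is empty, and $K_\infty=S_0=\partial K_q\cup\bigcup_j S_j$.

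Granting these, Theorem~\ref{THM:MAIN} immediately yields the ``if'' half of (2): there $S_0\supset\uplim_n K(f_n)\supset\lowlim_n K(f_n)\supset S_0$. For the two ``only if'' halves I argue by contraposition. If some $z_0\in\overline{\mathbb D}$ has $|q(z_0)|>1$, then (moving $z_0$ slightly into $\mathbb D$ if it lay on $S_0$) there is a small closed disk about $z_0$, contained in $\mathbb D_r$ for some $r<1$, on which $|q|\ge1+2\eta$; for $n$ large $|f_n|\ge1+\eta$ there, while Lemma~\ref{LEM:DISKBOUND} puts $K(f_n)\subset\mathbb D_{1+\eta}$, so by backward invariance of $K(f_n)$ the disk is disjoint from $K(f_n)$ for all large $n$, and $z_0\notin\lim_n K(f_n)$; hence that limit cannot be $\overline{\mathbb D}$. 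If instead some $z_0\in\overline{\mathbb D}$ has $|q(z_0)|<1$, then either $S_1\ne\emptyset$, in which case Theorem~\ref{THM:MAIN} places a point of $\lowlim_n K(f_n)$ strictly inside $\mathbb D$ and the limit is not $S_0$; or $S_1=\emptyset$, in which case $\{z\in\mathbb D:|q(z)|<1\}$ is a nonempty relatively clopen subset of the connected set $\mathbb D$, hence all of $\mathbb D$, so $q(\overline{\mathbb D})\subset\overline{\mathbb D}$ and the ``if'' half of (1) (proved next) gives $\lim_n K(f_n)=\overline{\mathbb D}\ne S_0$.

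It remains to prove the ``if'' half of (1): if $q(\overline{\mathbb D})\subset\overline{\mathbb D}$, then $\lim_n K(f_n)=\overline{\mathbb D}$. By the first paragraph $\uplim_n K(f_n)\subset K_\infty=\overline{\mathbb D}$, so it suffices to show $\lowlim_n K(f_n)\supset\mathbb D$. By Brouwer's theorem $q$ has a fixed point in $\overline{\mathbb D}$, which by hypothesis is not on $S_0$, so it is some $p\in\mathbb D$; since $\deg q\ge2$, $q$ is not an automorphism of $\mathbb D$, so the Schwarz lemma gives $|q'(p)|<1$, and the usual estimate (conjugate $p$ to $0$ and bound $q(z)/z$ on subdisks) shows $q^k\to p$ uniformly on $\overline{\mathbb D}_r$ for every $r<1$. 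Now fix $r<1$ and choose $\rho>0$ so small that $\overline{D}(p,\rho)\subset\mathbb D$ and $q$ maps it into $\{|z-p|\le\lambda\rho\}$ for a fixed $\lambda<1$; since $z^n\to0$ uniformly on $\overline{D}(p,\rho)$, for $n$ large $f_n$ maps $\overline{D}(p,\rho)$ into itself. A short induction on $m$ shows that for each fixed $m$, $f_n^m\to q^m$ uniformly on $\overline{\mathbb D}_r$: at each step the iterate $f_n^j(\overline{\mathbb D}_r)$ stays inside a fixed subdisk $\overline{\mathbb D}_{\sigma_j}$ with $\sigma_j<1$ (because $q(\mathbb D)\subset\mathbb D$), so the term $z^n$ added at that step has modulus at most $\sigma_j^n\to0$, uniformly. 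Choosing $m$ with $q^m(\overline{\mathbb D}_r)\subset\{|z-p|<\rho/2\}$, we get $f_n^m(\overline{\mathbb D}_r)\subset\overline{D}(p,\rho)$ for $n$ large, and since $\overline{D}(p,\rho)$ is $f_n$-invariant the forward $f_n$-orbit of every point of $\overline{\mathbb D}_r$ is bounded; thus $\overline{\mathbb D}_r\subset K(f_n)$ for all large $n$. Letting $r\uparrow1$ and using that $\lowlim_n K(f_n)$ is closed gives $\overline{\mathbb D}\subset\lowlim_n K(f_n)\subset\uplim_n K(f_n)\subset\overline{\mathbb D}$, so (1) follows.

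The delicate step is this last one: one has to transfer ``$q$ attracts all of $\overline{\mathbb D}$ to the interior point $p$'' into ``$f_n$ captures $\overline{\mathbb D}_r$ near $p$ for all large $n$''. The only thing that can go wrong is that the $f_n$-orbit of $\overline{\mathbb D}_r$ drifts out toward $S_0$, where the perturbation $z^n$ is no longer negligible, before being captured; forcing every finite stretch of that orbit to remain in a fixed compact subset of $\mathbb D$ uniformly in $n$ is exactly what the induction $f_n^m\to q^m$ accomplishes, and the hypothesis that $q$ has no fixed point on $S_0$ is precisely what rules out a boundary Denjoy--Wolff point of $q$ that would push those orbits onto the circle.
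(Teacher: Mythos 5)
Your proof is correct, and its overall strategy matches the paper's: read off as much as possible from Theorem~\ref{THM:MAIN} and supply a genuine dynamical argument only for pushing the interior of $\mathbb D$ into $\lowlim_{n\to\infty}K(f_n)$ in part (1). The differences are in the tools and in the level of detail. For (1), the paper gets the interior fixed point from the Denjoy--Wolff theorem and then invokes Lemma~\ref{LEM:ATTRACT} to produce an attracting fixed point of $f_n$, asserting rather briskly that this yields $\mathbb D_{1-\epsilon}\subset K(f_n)$; you instead use Brouwer plus the Schwarz--Pick lemma (an elementary proof of the interior Denjoy--Wolff case) and then build an explicit $f_n$-invariant disk around $p$ together with the induction $f_n^m\to q^m$ uniformly on $\overline{\mathbb D}_r$ -- in effect re-proving Lemma~\ref{LEM:BOUNDED} uniformly on compacts and bypassing Lemma~\ref{LEM:ATTRACT} entirely, which makes the ``trapping'' step fully explicit where the paper leans on the argument later carried out in the proof of Theorem~\ref{THM:HYPERBOLIC}. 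For the ``if'' half of (2), the paper redoes an escape estimate ($\min_{\overline{\mathbb D}_{1-\epsilon}}|q|=s>1$) plus Lemma~\ref{LEM:CIRCPREIM}, whereas you simply identify $K_\infty=S_0$ via the minimum modulus principle and quote Theorem~\ref{THM:MAIN}; this is cleaner and also makes explicit the fact (used but not justified in the paper's $1<s$ claim) that $|q|>1$ on $\mathbb D$, which is what kills the sets $S_j$, $j\ge1$. Finally, for the ``only if'' half of (2) the paper asserts that $q(\overline{\mathbb D})\cap\mathbb D\neq\emptyset$ forces some $S_j$, $j\ge1$, to be nonempty (and cites Theorem~\ref{THM:HYPERBOLIC}, apparently meaning Theorem~\ref{THM:MAIN}); that assertion fails when $q(\overline{\mathbb D})\subset\mathbb D$, and your case split ($S_1\neq\emptyset$, or else the clopen argument giving $q(\overline{\mathbb D})\subset\overline{\mathbb D}$ and hence limit $\overline{\mathbb D}$ by part (1)) handles exactly that case, so your version is in fact slightly more complete than the paper's at this point.
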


With a couple additional assumptions on $q$, we also have the following stronger result.
\begin{theorem}
\label{THM:HYPERBOLIC}
If $\deg q\geq2$ and $q$ is hyperbolic with no attracting periodic points on $S_0$, then under the Hausdorff metric
\[\lim_{n\rightarrow\infty}K(f_{n})=K_{\infty}.\]
\end{theorem}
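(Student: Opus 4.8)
The plan is to reduce the theorem to closing the single remaining gap in Theorem~\ref{THM:MAIN}: under the stated hypotheses I will show that $K_q\subset\lowlim_{n\to\infty}K(f_n)$. Granting this, Theorem~\ref{THM:MAIN} already gives $\bigcup_{j\ge0}S_j\subset\lowlim_{n\to\infty}K(f_n)$, so
\[
K_\infty=K_q\cup\bigcup_{j\ge0}S_j\ \subset\ \lowlim_{n\to\infty}K(f_n)\ \subset\ \uplim_{n\to\infty}K(f_n)\ \subset\ K_\infty,
\]
whence all four sets coincide; in particular the Hausdorff limit exists and equals $K_\infty$ (and $K_\infty$ is closed).

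To prove $K_q\subset\lowlim_{n\to\infty}K(f_n)$, I would split $K_q$ along the Julia set of $q$. If $z_0\in K_q\cap J(q)$, then since $\deg q\ge2$ one has $J(q)=\partial A_q(\infty)$, where $A_q(\infty)$ is the basin of $\infty$ under $q$; hence $z_0$ is a limit of points whose $q$-orbit escapes to $\infty$ and therefore leaves $\mathbb D$, so those points lie outside $K_q$ and $z_0\in\partial K_q\subset\lowlim_{n\to\infty}K(f_n)$ by Theorem~\ref{THM:MAIN}. So the substantive case is $z_0\in K_q$ lying in the Fatou set of $q$. Because $q$ is hyperbolic, every point of its Fatou set is attracted to an attracting cycle of $q$ (possibly $\infty$); $z_0\notin A_q(\infty)$ since its orbit is bounded, so $z_0$ is attracted to a finite cycle $C=\{a_1,\dots,a_p\}$. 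The $q$-orbit of $z_0$ accumulates at every point of $C$ while remaining in $\mathbb D$, so $C\subset\overline{\mathbb D}$, and the hypothesis that $q$ has no attracting periodic points on $S_0$ then forces $C\subset\mathbb D$.

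The heart of the argument is a perturbation-and-shadowing step that uses the fact that $z^n\to0$ uniformly on $\overline{\mathbb D}_r$ for every $r<1$, so $f_n\to q$ uniformly on compact subsets of $\mathbb D$. Since $C\subset\mathbb D$ is an attracting cycle, fix an open trapping neighborhood $W=W_1\cup\dots\cup W_p$ of $C$ with $\overline W\subset\mathbb D$, $q(\overline{W_i})\subset W_{i+1}$ (indices mod $p$), and $\bigcap_{k\ge0}q^k(W)=C$. Each $q(\overline{W_i})$ is a compact subset of the open set $W_{i+1}$ and $f_n\to q$ uniformly on $\overline{W_i}\subset\mathbb D$, so $f_n(\overline{W_i})\subset W_{i+1}$ for all $n$ large; hence for such $n$, once an $f_n$-orbit enters $W$ it stays in $W\subset\overline{\mathbb D}$ and is bounded. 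Now, given $z_0\in K_q$ attracted to $C$, choose $m$ with $q^m(z_0)\in W$; the finitely many points $z_0,q(z_0),\dots,q^m(z_0)$ all lie in some $\overline{\mathbb D}_r$ with $r<1$. Fixing $r'\in(r,1)$ and estimating
\[
\bigl|f_n^{k+1}(z_0)-q^{k+1}(z_0)\bigr|\ \le\ \sup_{\overline{\mathbb D}_{r'}}\bigl|f_n-q\bigr|\ +\ \bigl|q\bigl(f_n^{k}(z_0)\bigr)-q\bigl(q^{k}(z_0)\bigr)\bigr|,
\]
an induction on $k=0,1,\dots,m$ shows $f_n^k(z_0)\to q^k(z_0)$ as $n\to\infty$ (for $n$ large the iterates stay in $\overline{\mathbb D}_{r'}$, so the first term applies, and the second tends to $0$ by continuity of $q$). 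In particular $f_n^m(z_0)\in W$ for $n$ large, whence $z_0\in K(f_n)$ for all $n$ large, i.e.\ $z_0\in\lowlim_{n\to\infty}K(f_n)$.

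I expect the main obstacle to be the bookkeeping in this last step: one must place the trapping neighborhood $W$ strictly inside $\mathbb D$ — this is exactly where the hypothesis ``no attracting periodic points on $S_0$'' enters, since it leaves room for the convergence $z^n\to0$ to take effect uniformly — and one must run the finite shadowing so that the iterates $f_n^k(z_0)$ do not drift out of the compact set $\overline{\mathbb D}_{r'}$ on which $f_n$ is uniformly close to $q$. Everything else (the Julia/Fatou dichotomy, the location of $C$, and the collapse of the four-term chain) is soft once hyperbolicity is available.
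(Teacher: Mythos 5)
Your proposal is correct and takes essentially the same route as the paper: reduce via Theorem \ref{THM:MAIN} to handling $K_q$, use hyperbolicity together with the no-attracting-cycle-on-$S_0$ hypothesis to place each limiting cycle inside $\mathbb D$, build a forward-invariant trapping neighborhood with closure in $\mathbb D$ that remains invariant for $f_n$ by uniform convergence of $f_n$ to $q$ on compact subsets of $\mathbb D$, and use finite-time shadowing to carry points into that trap, hence into $K(f_n)$. The differences are only organizational (you argue pointwise on all of $K_q$, splitting along $J(q)$ and sending the $J(q)$ part to $\partial K_q$, while the paper treats the interior $K_q^{\circ}$ via compact subsets and cites its Lemmas \ref{LEM:ATTRACT} and \ref{LEM:BOUNDED}); the substance is the same.
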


We are left with the following open question.
\begin{question}
In what ways can the hypotheses from Theorem \ref{THM:HYPERBOLIC} on polynomial $q$ be relaxed and still have $\lim_{n\rightarrow\infty} K(f_{n}) = K_\infty$?
\end{question}

Following a brief tour of background information and examples in Section \ref{SEC:BG}, we present the proof of Theorem \ref{THM:MAIN} in Section \ref{SEC:PROOF}.  Lastly, Section \ref{SEC:HYPERBOLIC} is devoted to the proofs of theorems that require specific hypotheses on $q$.

The authors are grateful to Roland Roeder at Indiana University Purdue University Indianapolis for his very helpful advice and the Butler University Mathematics Research Camp, where this project began. We are also very grateful to the referee for their insightful and helpful suggestions.  All images we created with the Dynamics Explorer \cite{DE} program.

\section{Background and Examples}\label{SEC:BG}

\subsection{Notation and Terminology}

The main results in this note rely on the convergence of sets in the Riemann sphere, $\hat{\mathbb C}$, where the convergence is with respect to the Hausdorff metric.  Given two sets $A,B$ in a metric space $(X,d)$, the Hausdorff distance $d_{\mathcal H}(A,B)$ between the sets is defined as
\begin{eqnarray*}
d_{\mathcal H}(A,B)&=&\max\left\{\sup_{a\in A}d(a,B),\sup_{b\in B}d(b,A)\right\}\\
&=&\max\left\{\sup_{a\in A}\inf_{b\in B}d(a,b),\sup_{b\in B}\inf_{a\in A}d(a,b)\right\}.
\end{eqnarray*}
The distance from each point in $A$ to $B$ has a least upper bound, and the same it true for each point from $B$ to $A$.  The Hausdorff distance is the supremum over all of these distances.  As an example, consider a regular hexagon $A$ with sides of length $r$ inscribed in a circle $B$ of radius $r$.  In this case, $d_{\mathcal H}(A,B)=r(1-\sqrt3/2)$, the shortest distance from the circle to the midpoint of any of the sides of the hexagon.  

Filled Julia sets $K(f_{n})$ are compact \cite{beardon}, bounded, and contained in the compact space $\hat{\mathbb C}$.  Moreover, with the Hausdorff metric $d_{\mathcal H}$, the space of all subsets of $\hat{\mathbb C}$ is complete \cite{henrikson}.  Suppose $S_n$ and $S$ are compact subsets of $\mathbb C$.  We say $S_n$ converges to $S$ and write $\lim_{n\rightarrow\infty}S_n=S$ if for all $\epsilon>0$, there is $N>0$ such that for all $n\geq N$, we have $d_{\mathcal H}(S_n, S)<\epsilon$.

We've also made use of Painlev\'e-Kuratowski set convergence \cite{ROCKAFELLAR}.  For a sequence of sets, $S_n$, we have
\begin{eqnarray*}
\lowlim_{n\rightarrow\infty}S_n&=&\{z\in\mathbb C\colon\uplim_{n\rightarrow\infty}d(z,S_n)=0\},\\
\uplim_{n\rightarrow\infty}S_n&=&\{z\in\mathbb C\colon\lowlim_{n\rightarrow\infty}d(z,S_n)=0\}.
\end{eqnarray*}
It follows immediately that $\lowlim_{n\rightarrow\infty}S_n\subset\uplim_{n\rightarrow\infty}S_n$.  We say $S_n$ converges to a set $S$ in the sense of Painlev\'e-Kuratowski if 
$\lowlim_{n\rightarrow\infty}S_n=\uplim_{n\rightarrow\infty}S_n=S$, 
or equivalently, $\uplim_{n\rightarrow\infty}S_n\subseteq\lowlim_{n\rightarrow\infty}S_n=S$.
It is shown in \cite{DONTCHEV} that for sequences of bounded sets, the notion of Painlev\'e-Kuratowski set convergence agrees with the notion of convergence with Hausdorff distance.

\subsection{Complex Dynamics}

We provide here only the fine details relevant to this paper. Thorough explorations of this subject and proof of all the facts below can be found in \cite{milnor,beardon,carleson}.  
The Fatou set of rational map $f\colon\hat{\mathbb C}\rightarrow\hat{\mathbb C}$, denoted $\mathcal F(f)$, is the set of points for which the iterates of $f$ form a normal family; the Julia set of $f$, denoted $J(f)$, is the complement of $\mathcal F(f)$ in $\hat{\mathbb C}$.  When $f$ is a polynomial map, the Julia set of $f$ is the boundary of the filled Julia set; that is, $J(f)=\partial K(f)$.

We say a point $z\in\hat{\mathbb C}$ is periodic for $f$ with period $k$ if $f^k(z)=z$ and the points $z,f(z),\dots,f^{k-1}(z)$ are all distinct.  The multiplier $\lambda$ of a periodic point $z_0$ of period $k$ is defined as
\[\lambda=(f^k)'(z_0)=\prod_{i=0}^{k-1}f'(f^i(z)).\]
If $|\lambda|<1$, then $z_0$ is attracting; if $\lambda>1$, $z_0$ is repelling; if $\lambda=1$, $z_0$ is indifferent.  Repelling periodic points are contained in $J(f)$; in fact, repelling periodic points are dense in $J(f)$.  Attracting periodic points, on the other hand, are contained in $\mathcal F(f)$.  Moreover, for every attracting periodic point $z_0$ of period $k$, there is an open neighborhood $B(z_0,\epsilon)$ such that $f^k(B(z_0,\epsilon))\subset B(z_0,\epsilon)$ and the orbit by $f^k$ of any point in $B(z_0,\epsilon)$ converges to $z_0$.  The set of all points whose orbits by $f^k$ converge to $z_0$ is called the basin of attraction for $z_0$.  Finally, a rational map is called hyperbolic if every point in $\mathcal F(f)$ converges to an attracting periodic cycle.

\subsection{Examples}\label{SEC:EX}

\begin{example}
Let $q(z)=0.75z^2+c_j$ with $c_1=0.21+0.017i$, $c_2=0.41+0.047i$, and $c_3=1.41+1.17i$.  For $c_1$, we have $q(\overline{\mathbb D})\subset\mathbb D$, so in this case, $K_{\infty}=\overline{\mathbb D}$.  For $c_3$, we have $q(\overline{\mathbb D})\cap\mathbb D=\emptyset$, so in this case, $K_{\infty}=S_0$.  Both of these cases follow from Corollary \ref{COR:MAIN}.  The more interesting case is $c_2$ in which $q(\mathbb D)\backslash\mathbb D\neq\emptyset$.  See Figure \ref{FIG:EX}.  The limit, should it exist, of $K(f_{n})$ is the set $K_{\infty}$, which is now significantly more complicated, neither the closed unit disk nor the unit circle.
\begin{figure}[h]
\centering
\hspace{0pt}\scalebox{0.4}{\includegraphics{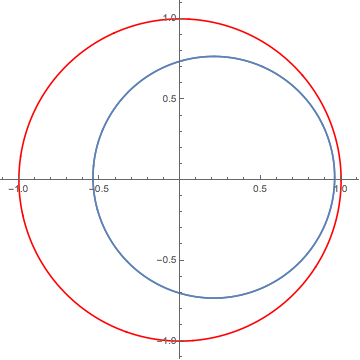}}
\scalebox{0.25}{\includegraphics{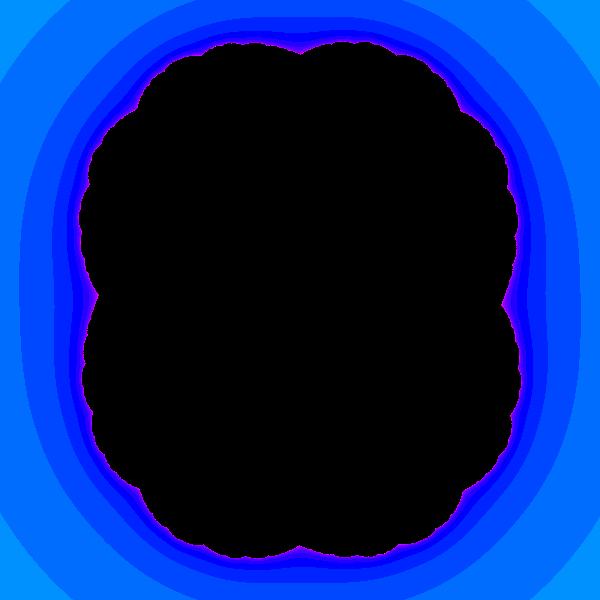}}
\scalebox{0.25}{\includegraphics{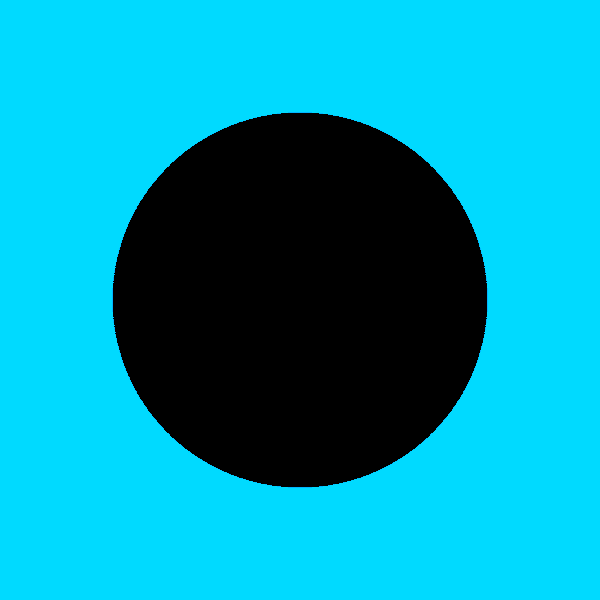}}\hspace{0pt}\\[6pt]
\hspace{0pt}\scalebox{0.4}{\includegraphics{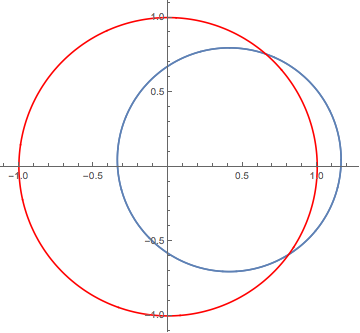}}
\scalebox{0.25}{\includegraphics{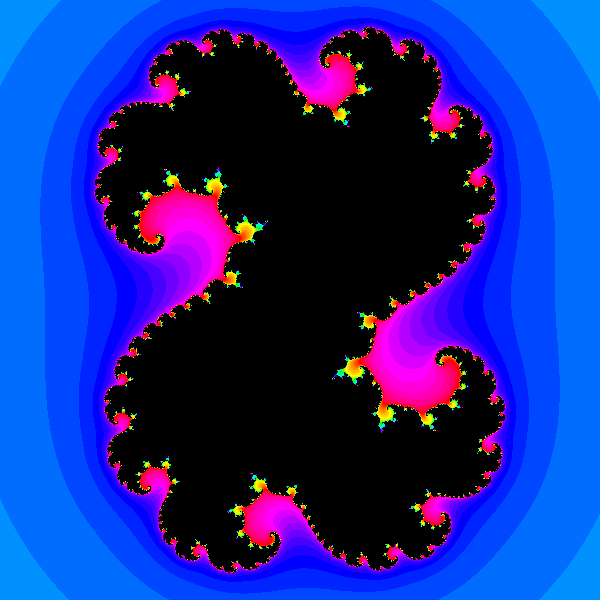}}
\scalebox{0.25}{\includegraphics{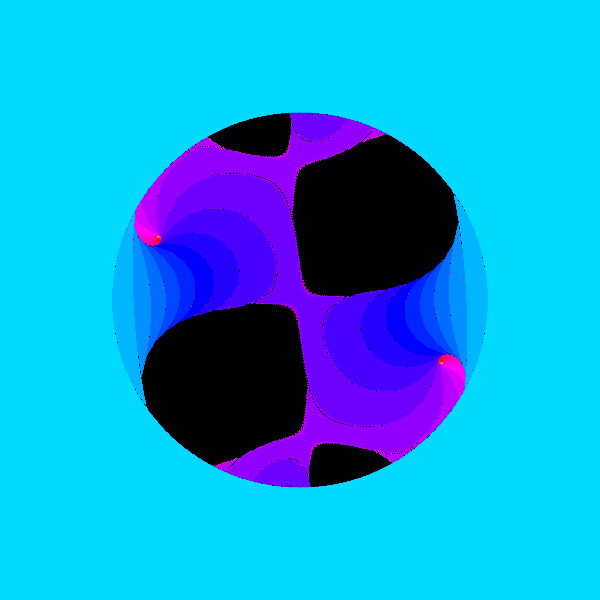}}\hspace{0pt}\\[6pt]
\hspace{0pt}\scalebox{0.4}{\includegraphics{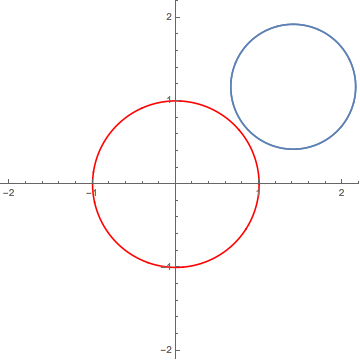}}
\scalebox{0.25}{\includegraphics{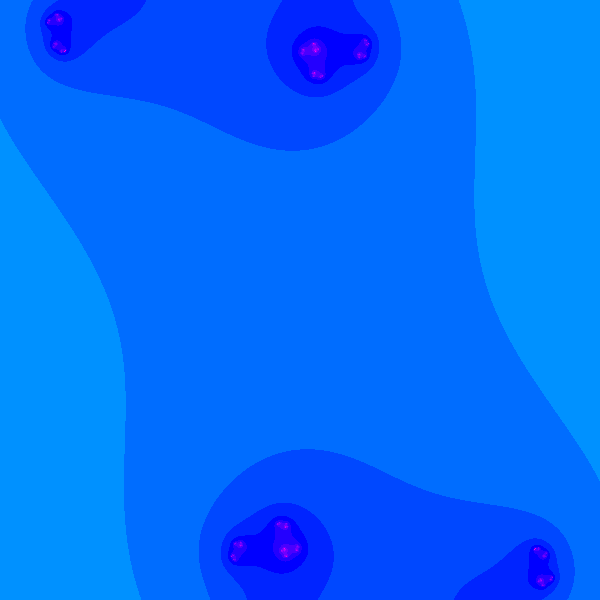}}
\scalebox{0.25}{\includegraphics{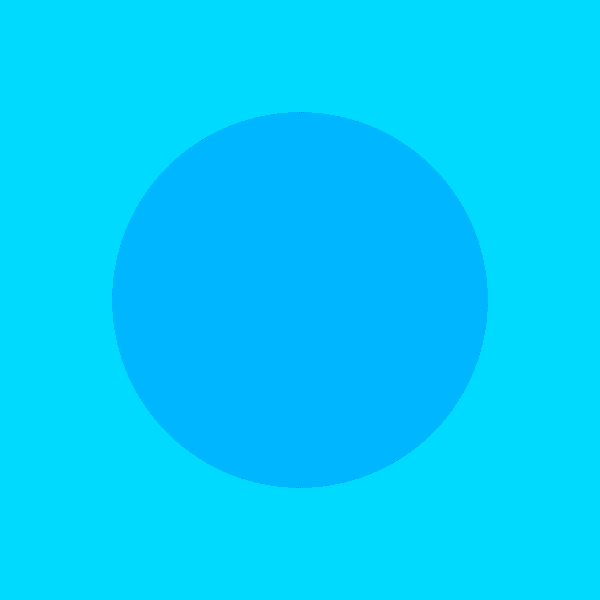}}\hspace{0pt}
\caption{Each row, left to right: $q(\overline{\mathbb D})$ for $q(z)=0.75z^2+c_j$, $K(q)$, and $K(f_{1800})$.  The first row is for $c_1=0.21+0.017i$, the second row is $c_2=0.41+0.047i$, and the third row is $c_3=1.41+1.17i$. Scale is the same in pictures in the last two columns.}
\label{FIG:EX}
\end{figure}
\end{example}

\section{Proof of Main Results}\label{SEC:PROOF}

\begin{proof}[Proof of Lemma \ref{LEM:DISKBOUND}]
Let $z\in \mathbb C\setminus\overline{\mathbb D}_{1+\epsilon}$.  We prove $|f^m_{n,q}(z)|\geq B^m$ for all $m\geq 1$ using induction.   Let $a_i$ be the coefficients of $q$, and pick $M>d(\max|a_i|)$.  Then for any $|z|>1$, 
we have $|q(z)|\leq M|z|^d$.  
Choose $B>\max{\{1,M\}}$ and $N>d$ large enough that $|z|^N>\max\{4B,2M|z|^d\}$. Let $n\geq N$.  Observe that
\begin{eqnarray*}
|f_{n,q}(z)|
\ \geq\ |z|^n-|q(z)|\ 
\geq\ |z|^n-M|z|^d
\ \geq\ |z|^n-\frac{1}{2}|z|^n\ \geq\ 2B\ >\ B.
\end{eqnarray*}

Now suppose for some $m\geq 1$, we know $|f_{n,q}^m(z)|\geq B^m$.  Let $z_m=f^m_{n,q}(z)$, and note that 
$|q(z_m)|\leq M|z_m|^d
<|z_m|^N$. Then for any $n\geq N$,
\begin{align*}
    \left|f^{m+1}_{n,q}(z)\right|\ 
     	    \geq\ \left|z_m^n\right|-\left|q(z_m)\right|\ 
             &\geq\ \left|z_m\right|^n-M\left|z_m\right|^d\\
            & \geq B^{mn}-B^{md}B\ 
             =\ B^{m+1}\left(B^{mn-m-1}-B^{md-m}\right)\ \geq\ B^{m+1}.
\end{align*}
It follows that $|f_{n,q}^m(z)|\geq B^m$ for all $m\geq 1$. Since $B>1$, the orbit of $z$ under $f_{n,q}$ escapes to infinity. Thus, $z\notin K(f_{n,q})$. 
\end{proof}

Before proving Theorems \ref{THM:HYPERBOLIC}, we need a couple more lemmas.

\begin{lemma}
\label{LEM:BOUNDED}
If $\{q^i(z_0)\}_{i=0}^{k-1}\subset\mathbb D$ and $|q^k(z_0)|=1$, then for any positive integer $m<k$, there is an $N$ such that for all $n\geq N$,
\[\{f_{n,q}^i(z_0)\}_{i=0}^m\subset\mathbb D.\]
Moreover, for all $\epsilon>0$ and any positive integer $m\leq k$, there is an $N$ such that for all $n\geq N$,
\[\max_{0\leq i\leq m}|f_{n,q}^i(z_0)-q^i(z_0)|<\epsilon.\]
\end{lemma}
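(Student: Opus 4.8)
The plan is to prove the second (quantitative) assertion by induction on $m$ and then read off the first assertion as an immediate corollary. The underlying mechanism is that $z\mapsto z^{n}$ converges to $0$ uniformly on every $\overline{\mathbb D}_{\rho}$ with $\rho<1$, so $f_{n,q}\to q$ uniformly on such a disk; the work is to propagate this through a bounded number of iterations, which is possible precisely because the first $k$ points of the $q$-orbit of $z_{0}$ lie a definite distance inside the unit circle.

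Concretely, set $\eta:=\min_{0\le i\le k-1}\bigl(1-|q^{i}(z_{0})|\bigr)>0$ and fix a Lipschitz constant $L$ for $q$ on the compact convex set $\overline{\mathbb D}_{2}$ (bound $|q'|$ there and integrate along segments). I claim: for every $\epsilon>0$ and every integer $m$ with $0\le m\le k$ there is an $N$ so that $\max_{0\le i\le m}|f_{n,q}^{i}(z_{0})-q^{i}(z_{0})|<\epsilon$ for all $n\ge N$. The case $m=0$ is trivial since $f_{n,q}^{0}(z_{0})=z_{0}=q^{0}(z_{0})$. For the step from $m$ to $m+1$ (so $m\le k-1$), apply the inductive hypothesis with the smaller tolerance $\epsilon':=\min\{\epsilon/(2L),\,\eta/2\}$ to obtain $N_{1}$; for $n\ge N_{1}$ the point $w:=f_{n,q}^{m}(z_{0})$ satisfies $|w-q^{m}(z_{0})|<\eta/2$, and since $m\le k-1$ forces $|q^{m}(z_{0})|\le 1-\eta$, we get the key uniform bound $|w|\le 1-\eta/2<1$. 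Then, writing $q^{m+1}(z_{0})=q(q^{m}(z_{0}))$,
\[
\bigl|f_{n,q}^{m+1}(z_{0})-q^{m+1}(z_{0})\bigr|=\bigl|w^{n}+q(w)-q(q^{m}(z_{0}))\bigr|\le |w|^{n}+L\,|w-q^{m}(z_{0})|\le (1-\tfrac{\eta}{2})^{n}+\tfrac{\epsilon}{2}.
\]
Choosing $N_{2}$ with $(1-\eta/2)^{n}<\epsilon/2$ for all $n\ge N_{2}$ and setting $N:=\max\{N_{1},N_{2}\}$ completes the induction, the indices $0\le i\le m$ already being controlled since $\epsilon'\le\epsilon$.

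The first assertion is now immediate: given a positive integer $m<k$, apply the statement just proved with $\epsilon=\eta/2$ to get an $N$ such that, for $n\ge N$ and every $0\le i\le m$,
\[
|f_{n,q}^{i}(z_{0})|\le |q^{i}(z_{0})|+\tfrac{\eta}{2}\le(1-\eta)+\tfrac{\eta}{2}<1,
\]
so $\{f_{n,q}^{i}(z_{0})\}_{i=0}^{m}\subset\mathbb D$, as desired.

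I expect the only real obstacle to be the bookkeeping that keeps $N$ uniform across all indices $0,\dots,m+1$ at once while shrinking the tolerance at each stage (by the factors $1/(2L)$ and $\eta/2$). The conceptual point one must not miss is that $|w|<1$ alone is useless here, since $w$ depends on $n$; one genuinely needs the quantitative bound $|w|\le 1-\eta/2$, valid uniformly in large $n$, so that $|w|^{n}\to 0$. The rest is routine continuity and Lipschitz estimation.
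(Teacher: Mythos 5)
The paper itself leaves this lemma to the reader (``follows by continuity''), and your induction---using the uniform margin $\eta>0$ so that $|w|\le 1-\eta/2$ persists for large $n$ and hence $|w|^{n}\to0$---is exactly that continuity argument made quantitative, and it is correct. The one bookkeeping slip is that $\epsilon'=\min\{\epsilon/(2L),\eta/2\}$ need not satisfy $\epsilon'\le\epsilon$ when $L<1/2$ and $\eta/2>\epsilon$, so the earlier indices are not automatically controlled to within $\epsilon$ as claimed; this is repaired by taking $\epsilon'=\min\{\epsilon,\epsilon/(2L),\eta/2\}$ or simply normalizing $L\ge1$, after which the argument closes.
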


\begin{proof}
This proof follows by continuity and is left to the reader.
\end{proof}

\begin{lemma}
\label{LEM:CIRCPREIM}
\[\lowlim_{n\rightarrow\infty}K(f_n)\supset\bigcup_{j=0}^{\infty}S_j\]
\end{lemma}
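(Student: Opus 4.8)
The plan is to verify the inclusion $\lowlim_{n\to\infty}K(f_n)\supset\bigcup_{j=0}^{\infty}S_j$ one point at a time. Fix $j\ge 0$ and $z_0\in S_j$. By the definition of the lower limit, $z_0\in\lowlim_{n\to\infty}K(f_n)$ means $\uplim_{n\to\infty}d(z_0,K(f_n))=0$, equivalently (the distances being nonnegative) $\lim_{n\to\infty}d(z_0,K(f_n))=0$, so that is what I will prove. Two facts do the work: (i) for large $n$ the fixed points of $f_n$ accumulate on \emph{every} point of $S_0$, and (ii) such a fixed point can be pulled back through the first $j$ iterates of $f_n$ to a point near $z_0$. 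Throughout I take $q$ nonconstant; the constant case is Theorem \ref{THM:BS} together with \cite{krs}.

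Step (i) already settles $j=0$: I claim that for every $z_0\in S_0$ and every $\epsilon>0$ there is $N$ so that $f_n$ has a fixed point within $\epsilon$ of $z_0$ for all $n\ge N$. The fixed points of $f_n$ are the zeros of $g_n(z)=z^n+q(z)-z$. Writing $q(z)-z=z^m h(z)$ with $h(0)\ne 0$, one has $g_n=z^mG_n$ where $G_n(z)=z^{n-m}+h(z)$ has degree $n-m$ (for $n$ large), leading coefficient $1$, and fixed nonzero constant term $h(0)$; the Erd\H{o}s--Tur\'an discrepancy estimate (\cite{erdos,hughes,iz}) then makes the arguments of the zeros of $G_n$ equidistribute on $[0,2\pi)$, so a fixed small arc about $\arg z_0$ contains more than $\deg q+1$ zeros of $G_n$ once $n$ is large. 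Separately, comparing $G_n$ with $h$ on a circle $|z|=r$ (with $r<1$ chosen to avoid the finitely many moduli of zeros of $h$) via Rouch\'e shows $G_n$ has at most $\deg h\le\deg q$ zeros in $\mathbb D_r$ for $n$ large; and every zero of $G_n$, being a fixed point of $f_n$, lies in $K(f_n)\subset\mathbb D_{1+\epsilon}$ by Lemma \ref{LEM:DISKBOUND}. Taking $r$ just below $1$, all but at most $\deg q$ zeros of $G_n$ have modulus near $1$; hence one of the more than $\deg q+1$ zeros with argument near $\arg z_0$ has modulus near $1$, so it lies within $\epsilon$ of $z_0$, and being a fixed point of $f_n$ it belongs to $K(f_n)$. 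This proves (i).

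Step (ii): fix $j\ge 1$ and $z_0\in S_j$, so $|z_0|<1$, the iterates $q^0(z_0),\dots,q^{j-1}(z_0)$ lie in $\mathbb D$, and $a:=q^j(z_0)\in S_0$. By (i) pick fixed points $p_n$ of $f_n$ with $p_n\to a$. Choose $\delta>0$ small enough that the closed disk $\overline{B}(z_0,\delta)\subset\mathbb D$, that $q^i(\overline{B}(z_0,\delta))\subset\overline{\mathbb D}_\rho$ for some fixed $\rho<1$ and all $0\le i\le j-1$, and (using that $q^j$ is nonconstant and hence open) that $z_0$ is the only point of $\overline{B}(z_0,\delta)$ sent to $a$ by $q^j$; set $m_\delta=\min_{|w-z_0|=\delta}|q^j(w)-a|>0$. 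Since $z^n\to 0$ uniformly on $\overline{\mathbb D}_\rho$, an induction on $i$ — the uniform, neighborhood version of Lemma \ref{LEM:BOUNDED} — gives $f_n^i\to q^i$ uniformly on $\overline{B}(z_0,\delta)$ for $0\le i\le j$. So for $n$ large $\sup_{\overline{B}(z_0,\delta)}|f_n^j-q^j|<m_\delta/2$ and $|p_n-a|<m_\delta/2$, whence on $|w-z_0|=\delta$
\[
\big|(f_n^j(w)-p_n)-(q^j(w)-a)\big|\le|f_n^j(w)-q^j(w)|+|a-p_n|<m_\delta\le|q^j(w)-a|,
\]
and Rouch\'e produces $w_n\in B(z_0,\delta)$ with $f_n^j(w_n)=p_n$. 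As $p_n$ is fixed by $f_n$, the forward orbit $\{w_n,f_n(w_n),\dots,f_n^{j-1}(w_n),p_n\}$ is finite, so $w_n\in K(f_n)$ and $d(z_0,K(f_n))<\delta$. Since such $\delta$ exist arbitrarily small, $\lim_{n\to\infty}d(z_0,K(f_n))=0$, i.e.\ $z_0\in\lowlim_{n\to\infty}K(f_n)$, which is the lemma.

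The delicate step is (i): Erd\H{o}s--Tur\'an controls only the \emph{arguments} of the fixed points, so one must separately pin down their moduli and discard the bounded number of fixed points that stagnate near the zeros of $z-q(z)$. Lemma \ref{LEM:DISKBOUND} supplies the outer modulus bound and the Rouch\'e comparison of $G_n$ with $h$ supplies the count of interior zeros; together these close the gap. The pull-back in (ii) is then routine, using only that $z^n\to 0$ on compact subsets of $\mathbb D$ and that the first $j$ iterates of $z_0$ stay in $\mathbb D$.
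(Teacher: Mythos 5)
Your proof is correct, and it follows the same two-stage strategy as the paper---first produce fixed points of $f_n$ accumulating at every point of $S_0$, then pull them back through $j$ iterates of $f_n$ to reach $S_j$---but the machinery at both stages is genuinely different. For the circle step the paper invokes its potential-theoretic Lemma \ref{LEM:POT}: $\tfrac1n\log|g_n|\to\log_{+}|z|$ in $L^1_{loc}$, so by Poincar\'e--Lelong the zero-counting measures of $g_n(z)=f_n(z)-z$ converge weakly to normalized arc length on $S_0$, which controls arguments and moduli simultaneously; you instead take the Erd\"os--Tur\'an route that the paper mentions but declines, and you correctly note that it controls only arguments, so you patch the moduli with Lemma \ref{LEM:DISKBOUND} (outer bound) together with a Rouch\'e comparison of $G_n=z^{n-m}+h$ against $h$ on $|z|=r<1$ (at most $\deg q$ interior zeros); this is a valid, more hands-on substitute for Lemma \ref{LEM:POT}, which you could simply have quoted. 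For the $S_j$ step the paper argues uniformly, covering $\bigcup_{j\ge1}S_j$ by finitely many balls (using that the $S_j$ accumulate on $\partial K_q$) and then only asserting that each center is near a preimage of a fixed point; you exploit that $\lowlim$ is defined pointwise, so no cover is needed, and you make the preimage production fully explicit: uniform convergence $f_n^i\to q^i$ on a small closed ball (a neighborhood version of Lemma \ref{LEM:BOUNDED}) plus Rouch\'e applied to $f_n^j-p_n$ versus $q^j-a$ produces $w_n$ with $f_n^j(w_n)=p_n$ a fixed point of $f_n$, hence $w_n\in K(f_n)$ and $d(z_0,K(f_n))<\delta$. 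In that respect your write-up is more detailed than the paper's sketch. Two small caveats: the factorization $q(z)-z=z^mh(z)$ tacitly assumes $q(z)\not\equiv z$ (the paper has the same implicit restriction when it applies Lemma \ref{LEM:POT} to $q(z)-z$), and deferring constant $q$ to Theorem \ref{THM:BS} and \cite{krs} does not really settle $|c|=1$---though that case is delicate for the statement itself and the paper does not treat it separately either.
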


There is a body of work on the distribution of polynomial roots begun by Erd\"os and Tur\'an in \cite{erdos}.  Specific results in \cite{hughes,iz} dealing with the accumulation of polynomial roots around the unit circle could be applied to the polynomials $f_{n}(z)-z$ to find fixed points.  However, the case here is simpler because $n-d-1$ of the coefficients of $f_{n}$ are all zero.  Thus, we have a concise argument using the following potential theory lemma.
\begin{lemma}
\label{LEM:POT}
For any fixed degree $d$ nonzero polynomial, $q$, the zeros of the polynomial $f_{n,q}(z)=z^n+q(z)$ cluster uniformly around the unit circle as $n\rightarrow\infty$.  More specifically, for each $n$, let
\[\mu_n=\frac{1}{n}\sum_{f_{n,q}(z)=0}\delta_z,\]
where $\delta_z$ is a point mass at $z$, and the roots of $f_{n,q}$ are counted with multiplicity.  Then $\mu_n\rightarrow\mu$ weakly as $n\rightarrow\infty$, where $\mu$ is normalized Lebesgue measure on $S_0$.
\end{lemma}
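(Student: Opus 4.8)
The plan is to identify $\tfrac1n\log|f_{n,q}|$ with the logarithmic potential of $\mu_n$ and to show that these potentials converge, off a Lebesgue-null set, to the potential of $\mu$. Since $f_{n,q}$ is monic of degree $n$, writing $f_{n,q}(z)=\prod_{j=1}^n(z-w_j)$ gives
\[
U^{\mu_n}(z):=\int_{\mathbb C}\log|z-w|\,d\mu_n(w)=\frac1n\log\bigl|f_{n,q}(z)\bigr|,
\]
a subharmonic function with $\tfrac1{2\pi}\Delta U^{\mu_n}=\mu_n$ in the sense of distributions. First I would localize the zeros: if $C=\sum_i|a_i|$, where the $a_i$ are the coefficients of $q$, then for $|z|\ge 2$ and $n$ large enough one has $|z|^n=|z|^{d}\,|z|^{n-d}>C|z|^d\ge|q(z)|$, so $f_{n,q}$ has no zero with $|z|\ge 2$. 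Hence, for all large $n$, $\mu_n$ is a probability measure supported in the fixed compact set $\overline{\mathbb D}_2$, and every subsequence of $(\mu_n)$ admits a weak-$*$ convergent sub-subsequence, with limit a probability measure supported in $\overline{\mathbb D}_2$.

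Next I would compute the pointwise limit of $U^{\mu_n}$. For $|z|>1$,
\[
U^{\mu_n}(z)=\log|z|+\frac1n\log\Bigl|1+\frac{q(z)}{z^n}\Bigr|\longrightarrow\log|z|,
\]
and for $|z|<1$ with $q(z)\neq 0$ we have $f_{n,q}(z)\to q(z)\neq 0$, so $U^{\mu_n}(z)=\tfrac1n\log|f_{n,q}(z)|\to 0$; this is the only place the hypothesis $q\not\equiv 0$ is used. Thus $U^{\mu_n}(z)\to\log^+|z|$ for every $z$ outside $S_0$ together with the at most $d$ zeros of $q$ in $\mathbb D$ --- a set of planar Lebesgue measure zero. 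By Jensen's formula, $\log^+|z|=\int_{S_0}\log|z-w|\,d\mu(w)=U^{\mu}(z)$, the potential of normalized Lebesgue measure $\mu$ on $S_0$, equivalently $\tfrac1{2\pi}\Delta\log^+|z|=\mu$.

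For the last step, let $\nu$ be the weak-$*$ limit of a subsequence $\mu_{n_k}$. By the lower envelope theorem of potential theory, $\liminf_k U^{\mu_{n_k}}(z)=U^{\nu}(z)$ for all $z$ outside a polar set; since a polar set is Lebesgue-null, combining this with the previous step gives $U^{\nu}(z)=U^{\mu}(z)$ for Lebesgue-almost every $z\in\mathbb C$. Logarithmic potentials of compactly supported probability measures belong to $L^1_{\mathrm{loc}}(\mathbb C)$, so $U^{\nu}=U^{\mu}$ as distributions, and applying $\tfrac1{2\pi}\Delta$ yields $\nu=\mu$. Therefore every subsequence of $(\mu_n)$ has a further subsequence converging weakly to $\mu$, and since the probability measures on the compact set $\overline{\mathbb D}_2$ form a metrizable space this forces $\mu_n\to\mu$ weakly, as claimed.

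The zero-localization and the pointwise limit are routine; the one point that needs care is the last step. One cannot simply assert ``$U^{\mu_n}\to U^{\mu}$ quasi-everywhere'', because the exceptional circle $S_0$ is \emph{not} polar; the fix is that $S_0$ is nonetheless Lebesgue-null, so the Lebesgue-a.e.\ pointwise limit combines with the quasi-everywhere identity furnished by the lower envelope theorem to pin down the potentials as $L^1_{\mathrm{loc}}$ functions, hence as distributions, hence the measures themselves. (Alternatively, one can avoid the lower envelope theorem: the family $\{U^{\mu_n}\}$ is locally uniformly bounded above, hence precompact in $L^1_{\mathrm{loc}}(\mathbb C)$ by the compactness theorem for subharmonic functions; the pointwise a.e.\ limit identifies the $L^1_{\mathrm{loc}}$ limit as $\log^+|z|$, and taking distributional Laplacians gives $\mu_n\to\mu$.)
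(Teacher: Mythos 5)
Your proof is correct, but its main line of argument is genuinely different from the paper's. The paper proves that $\tfrac1n\log|f_{n}|\to\log_{+}|z|$ \emph{uniformly} on compact subsets of $\mathbb C\setminus(S_0\cup Z_q)$ via explicit two-sided estimates, then invokes the compactness theorem for subharmonic functions to upgrade this to $L^1_{loc}$ convergence and applies $dd^c$ (Poincar\'e--Lelong) to conclude $\mu_n\to\mu$; your parenthetical alternative at the end is essentially this argument. Your main route instead works directly with the measures: you localize the zeros to $\overline{\mathbb D}_2$, extract weak-$*$ subsequential limits, use only \emph{pointwise} convergence of the potentials off the Lebesgue-null set $S_0\cup(Z_q\cap\mathbb D)$, and identify each subsequential limit by the lower envelope theorem, correctly observing that the non-polarity of $S_0$ is harmless because polar sets are Lebesgue-null and a.e.\ equality of locally integrable potentials suffices to equate the measures after taking distributional Laplacians. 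What your route buys is that no uniform convergence is needed, only pointwise limits plus standard potential-theoretic machinery; what the paper's route buys is that it avoids the lower envelope theorem altogether, needing only the (more elementary) $L^1_{loc}$ compactness of locally uniformly upper-bounded subharmonic families. One small caution: with your sign convention $U^{\mu}(z)=\int\log|z-w|\,d\mu(w)$, the lower envelope theorem gives $\limsup_k U^{\mu_{n_k}}=U^{\nu}$ quasi-everywhere (the $\liminf$ form belongs to the opposite sign convention $\int\log\frac{1}{|z-w|}\,d\mu$); this is immaterial in your application since $U^{\mu_{n_k}}$ actually converges at a.e.\ point, but the statement should be quoted with the correct envelope.
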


\begin{proof}[Proof of Lemma \ref{LEM:POT}]
Note that
\[\mu=dd^c\log_{+}|z|,\qquad\mbox{where}\qquad\log_{+}|z|=\left\{\begin{array}{rl}
\log|z|,&\mbox{ if }|z|\geq1\\
0,&\mbox{ if }|z|<1.\end{array}\right.\]

Let $Z_q$ be the zero set of $q$, let $K$ be a compact subset of $\mathbb C\backslash(S_0\cup Z_q)$, and let $A$ be the maximum of $|q(z)|$ on $K$.  Then there is an $\epsilon>0$ such that for any $z\in K$, we have $|q(z)|>\epsilon$ and either $|z|\geq1+\epsilon$ or $|z|\leq1-\epsilon$. 

If $|z|\geq1+\epsilon$, then 
\begin{eqnarray}
\label{EQN:POT}
\frac{1}{n}\log|f_{n}(z)|&=&\frac{1}{n}\log\left|z^n\left(1+\frac{q(z)}{z^n}\right)\right|
\ =\ \frac{1}{n}\log|z^n|+\frac{1}{n}\log\left|1+\frac{q(z)}{z^n}\right|\nonumber
\end{eqnarray}
Using this 
and defining $C=\max\{\log(1+A/(1+\epsilon)),-\log(-1-A/(1+\epsilon))\}$, we have
\begin{eqnarray}
\label{EQN:POT3}
\log_{+}|z|-\frac{C}{n}&\leq&\frac{1}{n}\log|f_{n}(z)|\ \leq\ \log_{+}|z|+\frac{C}{n}.
\end{eqnarray}

If $|z|<1-\epsilon$, then there is an $N$ such that for all $n\geq N$, we have $z^n\leq\max\{\epsilon/2,A\}$.  Then
\begin{eqnarray*}
\frac{\epsilon}{2}\leq|q(z)|-|z^n|\ \leq\ |f_{n}(z)|&\leq&|z^n|+|q(z)|\leq2A.
\end{eqnarray*}
Noting that $\log_{+}|z|=0$ when $|z|\leq1-\epsilon$, we have for all $|z|\leq1+\epsilon$ that
\begin{eqnarray}
\label{EQN:POT2}
\log_{+}|z|+\frac{\log(\epsilon/2)}{n}\ \leq\ \frac{1}{n}\log\left(\frac{\epsilon}{2}\right)\ \leq\ \frac{1}{n}\log|f_{n}(z)|&\leq&
\frac{1}{n}\log(2A)\ =\ \log_{+}|z|+\frac{\log(2A)}{n}.
\end{eqnarray}
Using Equations (\ref{EQN:POT3}) and (\ref{EQN:POT2}), we have 
$\frac{1}{n}\log|f_{n}(z)|\rightarrow\log_{+}|z|$ 
uniformly on $K$ as $n\rightarrow\infty$; 
by the compactness theorem for families of subharmonic functions \cite[Theorem 4.1.9]{hormander}, 
it follows that $\frac{1}{n}\log|f_{n}(z)|\rightarrow\log_{+}|z|$ in $L^1_{loc}(\mathbb C)$.  Note that $dd^c\log_{+}|z|=\mu$, and we have from the Poincar\`e-Lelong formula \cite{gh} that $\frac{1}{n}dd^c\log|f_{n}(z)|=\mu_n$. 
Thus, we have
\[\mu_n=\frac{1}{n}dd^c\log|f_{n}(z)|\rightarrow dd^c\log_{+}|z|=\mu\]
weakly as $n\rightarrow\infty$.
\end{proof}

\begin{proof}[Proof of Lemma \ref{LEM:CIRCPREIM}]
We first deal only with the unit circle by showing that $S_0\subset\lowlim_{n\rightarrow\infty}K(f_{n})$.  Let $z\in S_0$ and $\epsilon>0$.  Define 
\[g_{n}(z):=f_{n}(z)-z,\]
so the zeros of $g_{n}$ are fixed points of $f_{n}$.  By Lemma \ref{LEM:POT}, the fixed points of $f_{n}$ cluster uniformly near the unit circle.  If any of the fixed points are repelling, then they are contained in $J(f_{n})$ \cite{milnor}.  Otherwise, they are attracting or indifferent, in which case they must be $\epsilon$ close to $J(f_{n})$ because $K(f_{n})\subset\mathbb D_{1+{\epsilon}}$.  It follows that $S_0\subset\lowlim_{n\rightarrow\infty}K(f_{n})$.

Since filled Julia sets are backward invariant, the preimages by $f_{n}$ of the fixed points clustering on $S_0$ will also be in $K(f_{n})$.  We now show that these preimages cluster on the sets $S_j$ (which are contained in the preimages of $S_0$).  Specifically, we must show that for large enough $n$, any point in $S_j$ for any $j$ is close to a preimage of one of the fixed points on $S_0$.  This task is made easier by the fact that by construction, 
the nonempty $S_j$ accumulate (when there are an infinite number of them) on the boundary of $K_q$.  To see this, define
\[\mathcal K_{J}=\bigcap_{i=0}^J\{z\in\mathbb C\colon|q^i(z)|<1\},\]
and note that for any $j>J$, we have from construction that $S_j\subset\mathcal K_{J}$ and 
$\lim_{J\rightarrow\infty}\mathcal K_{J}=K_q$. 
For any $J$, the set $\mathcal K_J\backslash K_q$ has compact closure, so 
for any $\epsilon>0$, there is a $J$ and points $\{z_1,\dots,z_{\ell})\subset\bigcup_{j<J}S_j$ such that 
\[\bigcup_{j=1}^{\infty}S_j\subset\bigcup_{i=1}^{\ell}B(z_{i},\epsilon/2).\]
That is, we have a finite open cover of the set of all the $S_j$ sets: for all $z\in\bigcup_{j=1}^{\infty}S_j$, there is an integer $1\leq i\leq\ell$ such that $|z-z_{i}|<\epsilon/2$.  


With this finite open cover, it is now straightforward to use Lemmas~\ref{LEM:BOUNDED} and \ref{LEM:POT} to show that for all $n$ sufficiently large, each $z_i$ (a center of one of the balls in the open cover) is close to some $w_{i,n}$, a preimage of a fixed point of $f_{n}$ on $S_0$.  Since each $w_{i,n}\in K(f_n)$, we can choose $n$ large enough so that for all $z_i$ we have $d(z_i,K(f_{n}))<\epsilon/2$.  Thus, for any $z\in S_j$ for any $j$, we have $d(z,K(f_{n}))<\epsilon$.
\end{proof}

It is intuitive by the construction of $K_{\infty}$ that points bound a definite distance away from $K_{\infty}$ will not be in $K(f_{n})$ for all sufficiently large $n$.  We nevertheless provide the formal statement and proof of this fact in the following lemma.

\begin{lemma}\label{LEM:HALFHAUS}
For any $\epsilon>0$, there is an $N$ such that for any $n\geq N$
\[d(z_0,K_{\infty})\geq\epsilon\mbox{\ \ implies\ \ }
z_0\notin K(f_{n,z}).\]
\end{lemma}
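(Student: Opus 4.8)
The plan is to exploit the defining property of $K_\infty$: if $z_0$ is a definite distance away from $K_\infty$, then the $q$-orbit of $z_0$ must leave the closed unit disk by a definite amount after finitely many steps, and this number of steps is uniformly bounded over all such $z_0$. Once the $q$-orbit has escaped $\overline{\mathbb D}$, Lemma \ref{LEM:DISKBOUND} finishes the job; the only care needed is to transfer the escape statement from the $q$-orbit to the $f_{n,q}$-orbit using Lemma \ref{LEM:BOUNDED}.

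Here are the steps in order. First I would show the \emph{uniform escape time}: there exist an integer $M$ and a $\delta>0$ (both depending only on $\epsilon$ and $q$) such that every $z_0$ with $d(z_0,K_\infty)\geq\epsilon$ satisfies $|q^i(z_0)|>1+\delta$ for some $0\leq i\leq M$. To see this, argue by contradiction on compact pieces: the set $\overline{\mathbb D}_{1+\epsilon}$ is compact (outside it we are already done by Lemma \ref{LEM:DISKBOUND}), and a point $z_0\in\overline{\mathbb D}_{1+\epsilon}$ that fails this for all $M$ and all $\delta>0$ would have its entire $q$-orbit contained in $\overline{\mathbb D}$; tracking when (if ever) the orbit first hits $|q^i(z_0)|=1$ shows $z_0\in K_q\cup\bigcup_{j\geq0}S_j=K_\infty$, contradicting $d(z_0,K_\infty)\geq\epsilon$. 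A compactness/continuity argument then promotes this pointwise statement to the uniform one: the first index $i$ at which $|q^i(z)|>1$ is locally bounded, and the amount of escape is locally bounded below, so on the compact annular region $\{\,z : \epsilon\leq d(z,K_\infty),\ |z|\leq1+\epsilon\,\}$ we get a single $M$ and a single $\delta$.

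Second, I would apply Lemma \ref{LEM:BOUNDED} with this $M$: for the relevant $z_0$, and $\epsilon':=\delta/2$, there is $N_1$ so that for all $n\geq N_1$ we have $\max_{0\leq i\leq M}|f_{n,q}^i(z_0)-q^i(z_0)|<\delta/2$; hence for the escape index $i_0\leq M$ of $z_0$ we get $|f_{n,q}^{i_0}(z_0)|>1+\delta/2$. (A small point: Lemma \ref{LEM:BOUNDED} is stated for a fixed $z_0$, so I would either re-run its short continuity proof uniformly on the compact set above, or cover that compact set by finitely many balls on which the comparison holds simultaneously — both are routine.) Third, applying Lemma \ref{LEM:DISKBOUND} with $\epsilon''=\delta/2$ gives $N_2$ such that for $n\geq N_2$, $K(f_{n,q})\subset\mathbb D_{1+\delta/2}$; since $f_{n,q}^{i_0}(z_0)$ lies outside $\overline{\mathbb D}_{1+\delta/2}$ and the filled Julia set is forward invariant, $z_0\notin K(f_{n,q})$. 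Taking $N=\max\{N_1,N_2\}$ completes the proof.

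The main obstacle is the first step — the uniform escape time — and within it the uniformity rather than the mere pointwise escape. The pointwise statement is essentially the definition of $K_\infty$ unwound; the work is in checking that the escape index does not blow up and the escape margin does not shrink to zero as $z_0$ ranges over the compact region, for which one uses continuity of the iterates $q^i$ together with the fact that the bad region is closed and bounded. Everything after that is a direct concatenation of Lemmas \ref{LEM:BOUNDED} and \ref{LEM:DISKBOUND} with forward invariance of $K(f_{n,q})$.
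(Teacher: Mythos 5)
Your proposal is correct and follows essentially the same route as the paper: discard $|z_0|>1+\epsilon$ via Lemma \ref{LEM:DISKBOUND}, use compactness of $\{z\colon d(z,K_\infty)\geq\epsilon\}\cap\overline{\mathbb D}_{1+\epsilon}$ to get a uniform escape of the $q$-orbit past the unit disk, transfer this to $f_{n,q}$ by Lemma \ref{LEM:BOUNDED}, and conclude with Lemma \ref{LEM:DISKBOUND} and forward invariance. If anything, your version is slightly more careful than the paper's (a uniform bound $M$ on the escape index with margin $\delta$, rather than a single common iterate $j$, and the explicit remark that Lemma \ref{LEM:BOUNDED} must be applied uniformly on the compact set), but the underlying argument is the same.
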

\begin{proof}
First we consider the case in which $|z_0|>1+\epsilon$.  By Lemma \ref{LEM:DISKBOUND} there is a large enough $N$ such that for all $n\geq N$, we have $z_0\notin K(f_{n,z})$. 
To attend to the remaining points, suppose $z_0\in\overline{\mathbb D}_{1+\epsilon}$ and  $d(z_0,K_{\infty})\geq \epsilon$.  However, since $S_0\subset K_{\infty}$, we only need to consider $z_0\in\overline{\mathbb D}_{1-\epsilon}$ with $d(z_0,K_{\infty})\geq \epsilon$.  Note that $\{z\in\mathbb C\colon d(z,K_{\infty})\geq\epsilon\}\cap\overline{\mathbb D}_{1-\epsilon}$ is a compact set, so there is some $j$ such that for all $z_0\in\{z\in\mathbb C\colon d(z,K_{\infty})\geq\epsilon\}\cap\overline{\mathbb D}_{1-\epsilon}$, we have $|q^j(z_0)|>1+\epsilon$.  By Lemma \ref{LEM:BOUNDED}, $|f^j_{n,q}(z_0)|>1+\epsilon$ for all sufficiently large $n$.  Again, by Lemma \ref{LEM:DISKBOUND}, we have $z_0\notin K(f_{n,z})$ in this case as well.
\end{proof}

\begin{lemma}\label{LEM:ATTRACT}
For any periodic orbit $\{z_i\}_{i=0}^{k-1}$ of $q$ contained in $\mathbb D$ and any $\epsilon>0$, there is an $N$ such that for all $n\geq N$, $f_{n}$ has a periodic orbit $\{z_{i,n}\}_{i=0}^{k-1}$ also contained $\mathbb D$ such that
\[\max_{0\leq i\leq k-1}|z_i-z_{i,n}|<\epsilon.\]
Moreover, if  $\{z_i\}_{i=0}^{k-1}$ is attracting (repelling) for $q$, then each cycle $\{z_{i,n}\}_{i=0}^{k-1}$ is attracting (repelling) for each corresponding $f_{n}$.
\end{lemma}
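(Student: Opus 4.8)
The plan is to treat $f_n$ as a small perturbation of $q$ on compact subsets of $\mathbb D$ and to locate the desired cycle as a nearby zero of $f_n^k(z)-z$. The starting observation is that if $K\subset\mathbb D$ is compact with $\rho:=\max_{z\in K}|z|<1$, then $\sup_{z\in K}|f_n(z)-q(z)|=\sup_{z\in K}|z|^n\le\rho^n\to0$, so $f_n\to q$ uniformly on $K$; moreover $f_n'(w)=nw^{n-1}+q'(w)$ with $|nw^{n-1}|\le n\rho^{n-1}\to0$ on $K$, so $f_n'\to q'$ uniformly on $K$ as well. Since the period-$k$ points of $q$ are precisely the zeros of $q^k(z)-z$, the idea is to produce zeros of $f_n^k(z)-z$ near the $z_i$ by Hurwitz's theorem, then verify the orbit structure and the multiplier.

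First I would set up the neighborhoods. Write $\lambda=(q^k)'(z_0)=\prod_{i=0}^{k-1}q'(z_i)$ for the multiplier of the $q$-cycle, and in the attracting (resp.\ repelling) case fix $\delta>0$ with $|\lambda|+\delta<1$ (resp.\ $|\lambda|-\delta>1$); in the indifferent case the multiplier estimates below are simply omitted. Choose $r\in(0,\epsilon)$ small enough that the closed disks $\overline{B(z_i,r)}$, $0\le i\le k-1$, are pairwise disjoint and contained in $\mathbb D$; that $z_i$ is the only zero of $q^k(z)-z$ in $\overline{B(z_i,r)}$ (these zeros are isolated, the degenerate possibility $q^k\equiv\mathrm{id}$ forcing $\deg q=1$); that $q(\overline{B(z_i,r)})\subset B(z_{i+1},r/2)$ for all $i$ modulo $k$ (possible by uniform continuity, since $q(z_i)=z_{i+1}$); and that $\bigl|\prod_{i=0}^{k-1}q'(w_i)-\lambda\bigr|<\delta/2$ whenever $w_i\in\overline{B(z_i,r)}$. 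Set $L=\bigcup_i\overline{B(z_i,r)}$ and $\rho=\max_{z\in L}|z|<1$.

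Next I would bootstrap $f_n\to q$ up to $f_n^k\to q^k$ near the orbit. Once $\rho^n<r/2$, the bound $|f_n(z)-q(z)|\le\rho^n$ on $L$ together with $q(\overline{B(z_i,r)})\subset B(z_{i+1},r/2)$ gives $f_n(\overline{B(z_i,r)})\subset B(z_{i+1},r)$, so the forward orbits $f_n^i(\overline{B(z_0,r)})$ and the $q$-orbits $q^i(\overline{B(z_0,r)})$ both stay inside $L$ for $0\le i\le k$. A telescoping estimate using a Lipschitz constant for $q$ on $L$ — the same continuity argument underlying Lemma~\ref{LEM:BOUNDED} — then shows $f_n^k(z)-z\to q^k(z)-z$ uniformly on $\overline{B(z_0,r)}$. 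Since $q^k(z)-z$ does not vanish on $\partial B(z_0,r)$, Hurwitz's theorem (or Rouch\'e) supplies, for all large $n$, a zero $z_{0,n}\in B(z_0,r)$ of $f_n^k(z)-z$; put $z_{i,n}=f_n^i(z_{0,n})\in B(z_i,r)\subset\mathbb D$. This is a periodic orbit of $f_n$ of period dividing $k$, and it cannot have proper period $k'<k$, for then $z_{0,n}=z_{k',n}$ would lie in both $B(z_0,r)$ and $B(z_{k'},r)$, contradicting disjointness; hence the period is exactly $k$ and $\max_i|z_i-z_{i,n}|<r<\epsilon$. Finally, $f_n'\to q'$ uniformly on $L$ gives $\bigl|\prod_i f_n'(z_{i,n})-\prod_i q'(z_{i,n})\bigr|<\delta/2$ for large $n$, which with the choice of $r$ yields $\bigl|(f_n^k)'(z_{0,n})-\lambda\bigr|<\delta$, so the new cycle is attracting (resp.\ repelling) whenever the original one is.

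The only real subtlety is organizational: the radius $r$ must be fixed first — small enough both for the multiplier estimate and for the uniform-continuity containment $q(\overline{B(z_i,r)})\subset B(z_{i+1},r/2)$ — and only then may $N$ be chosen large enough to make $\rho^n$ and $n\rho^{n-1}$ small relative to that $r$. The one genuine gap is the degenerate case $q^k\equiv\mathrm{id}$, where Hurwitz does not apply directly; this occurs only when $q$ is an affine map of finite order, which is outside the situations to which the lemma is applied, but would need a separate (elementary) treatment if one wanted the statement in full generality.
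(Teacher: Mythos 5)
Your overall route is exactly the one the paper intends (the paper omits the proof, citing Rouch\'e's theorem plus the uniform convergence $f_n\to q$ and $f_n'\to q'$ on compact subsets of $\mathbb D$), and your exact-period and multiplier arguments are fine. There is, however, one step that fails as written: the claim that, by uniform continuity, you may choose $r$ so small that $q(\overline{B(z_i,r)})\subset B(z_{i+1},r/2)$ for every $i$. Because the same $r$ appears on both sides, shrinking $r$ does not help: for small $r$ the image $q(B(z_i,r))$ contains a disk about $z_{i+1}$ of radius roughly $|q'(z_i)|\,r$, so the containment is impossible for all small $r$ whenever some $|q'(z_i)|>1/2$. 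This is not a fringe case: every repelling cycle has at least one point with $|q'(z_i)|\geq1$, and repelling cycles are precisely what Lemma \ref{LEM:ATTRACT} is used for in the proof of Theorem \ref{THM:MAIN}. Since this containment is what powers your orbit-tracking step ($f_n(\overline{B(z_i,r)})\subset B(z_{i+1},r)$, hence $z_{i,n}\in B(z_i,r)$ and the orbit stays in $L$), that part of the argument loses its justification.

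The repair is routine and keeps your architecture: decouple the input radius from the target radius. Fix $r\in(0,\epsilon)$ small enough for the disjointness, isolation-of-zeros, and multiplier estimates, and then use continuity of the finitely many maps $q^i$, $0\le i\le k$, at $z_0$ to pick a smaller radius $s\in(0,r]$ with $q^i(\overline{B(z_0,s)})\subset B(z_i,r/2)$ for all $i$ (indices mod $k$). Your telescoping (finite-time shadowing) estimate, as in Lemma \ref{LEM:BOUNDED}, then gives $f_n^i\to q^i$ uniformly on $\overline{B(z_0,s)}$ for $0\le i\le k$; hence for large $n$ the $f_n$-orbit of $\overline{B(z_0,s)}$ stays in $\bigcup_i B(z_i,r)\subset\mathbb D$, Hurwitz/Rouch\'e applies to $f_n^k(z)-z$ on $B(z_0,s)$ (a multiple zero of $q^k(z)-z$ at $z_0$ is harmless), and the points $z_{i,n}=f_n^i(z_{0,n})$ lie in $B(z_i,r)$, after which your period and multiplier arguments go through verbatim. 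Your flag about the degenerate case $q^k\equiv\mathrm{id}$ is sensible but immaterial here, since it cannot occur for attracting or repelling cycles, the only cases in which the lemma is invoked.
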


While zeros of non-contant polynomials depend continuously on the coefficients of the polynomial, the set of polynomials $\{f_{n}\}$ is discrete.  Nevertheless, Lemma \ref{LEM:ATTRACT} still follows quickly from Rouche's theorem and the fact that on any compact subset $K$ of $\mathbb D$, we have $f_{n}|_K\rightarrow q$ uniformly and $f'_{n,q}|_K\rightarrow q'$ uniformly, so we omit the proof.

%

\begin{proof}[Proof of Theorem \ref{THM:MAIN}]
From Lemma \ref{LEM:HALFHAUS}, it follows that for all $z$,  $d(z,K_{\infty})\leq\lowlim_{n\rightarrow\infty}d(z,K(f_{n}))$.  From this and Lemma \ref{LEM:CIRCPREIM}, we have
\[K_q\cup\bigcup_{j=0}^{\infty}S_j\supset\uplim_{n\rightarrow\infty}K(f_n)
\supset\lowlim_{n\rightarrow\infty}K(f_{n})\supset\bigcup_{j=0}^{\infty}S_j,\]
so it remains only to show that $\lowlim_{n\rightarrow\infty}K(f_{n})\supset\partial K_q$.  A point $z\in\partial K_q$ must be an accumulation point of $\bigcup_{j=0}^{\infty}S_j$ or in $\partial K(q)\cap\mathbb D=J(q)\cap\mathbb D$. In the former case, we have $z\in\lowlim_{n\rightarrow\infty}K(f_{n})$, so suppose the latter: $z\in J(q)\cap\mathbb D$, and let $\epsilon>0$.  Since repelling periodic points are dense in $J(q)$ \cite{milnor}, there is a repelling periodic point $z_0\in J(q)\cap\mathbb D$ such that $|z-z_0|<\epsilon/2$.  
By Lemma \ref{LEM:ATTRACT}, there is an $N$ such that for all $n\geq N$, there is a repelling periodic point $z_n\in J(f_{n})=\partial K(f_{n})$ such that $|z_0-z_n|<\epsilon/2$.  Thus, $d(z,K(f_{n}))\leq|z-z_n|<\epsilon$.
\end{proof}

\section{Hyperbolic and Other More Specific Maps}
\label{SEC:HYPERBOLIC}

We turn our attention now to the more specific cases in which $q$ is hyperbolic and has no periodic points on $S_0$.  

\begin{proof}[Proof of Theorem \ref{THM:HYPERBOLIC}]
By Theorem \ref{THM:MAIN}, it remains only to show that $K_q^{\circ}$, the interior of $K_q$, is a subset of $\lowlim_{n\rightarrow\infty}K(f_{n})$.  That is, we need only now show that any point of $K_q^{\circ}$ is close to $K(f_{n})$ for all sufficiently large $n$. 
If $K_q^{\circ}$ is empty, we are done, so we proceed with the assumption that $K_q^{\circ}$ is nonempty.

Since $q$ is hyperbolic, we know that the orbit of every point in $K_q^{\circ}\subset\mathcal F(q)\cap\mathbb D$ converges to an attracting periodic cycle for $q$.  Suppose $\{z_i\}_{i=0}^{k-1}$ is such an attracting cycle.  We will show that for large enough $n$, every $f_{n}$ has an attracting periodic near $\{z_i\}_{i=0}^{k-1}$.  Then every point in $K_q^{\circ}$ will have some forward image near an attracting cycle of $f_{n}$, ensuring these points are in $K(f_{n})$.

For each $z_i$ in the attracting cycle for $q$, there is a neighborhood $B(z_i,r_i)$ such that $q^k(B(z_i,r_i))\subset B(z_i,r_i)\subset\mathbb D$.  Let $r=\min_{i}r_i$. By Lemma \ref{LEM:ATTRACT}, there is an $N_0$ such that for all $n\geq N_0$, $f_{n}$ has an attracting periodic orbit $\{z_{i,n}\}_{i=0}^{k-1}$ where $z_{i,n}\in B(z_i,r)$. By Lemma \ref{LEM:BOUNDED}, there is an $N_1$ such that for all $n\geq N_1$ we also have $f_{n}^k(B(z_i,r))\subset B(z_i,r)$.  Thus, we have constructed a neighborhood that is forward invariant for all $f_{n}^k$ with $n\geq N_1$, so this neighborhood must be contained in each $K(f_{n})$.

We now show that compact subsets of $K_q^{\circ}$ are eventually mapped by $f_{n}$ into the forward invariant neighborhood we just constructed.  Let $\epsilon>0$ and $\mathcal K_{\epsilon}$ be a compact subset of $K_q^{\circ}$ such that for all $z\in\mathcal K_{\epsilon}$, there is some $z_0\in\partial K_q$ such that $d(z,z_0)<\epsilon$.  Then there is an $m$ such that 
\[q^m(\mathcal K_{\epsilon})\subset\bigcup_{i=0}^{k-1}B(z_i,r).\]
Again by Lemma \ref{LEM:BOUNDED}, there is an $N_2$ such that for all $m\geq N_2$ we also have
\[f_{n}^m(\mathcal K_{\epsilon})\subset\bigcup_{i=0}^{k-1}B(z_i,r).\]
Let $N=\max_{0\leq i\leq2}N_i$. For all $n\geq N$, it follows that the orbit by $f_{n}$ of any point $z\in\mathcal K_{\epsilon}$ converges to an attracting periodic cycle of $f_{n}$ contained in $\mathbb D$; that is, $\mathcal K_{\epsilon}\subset K(f_{n})$.  Then for any point $z\in K_q^{\circ}$, there is a $\hat z\in\mathcal K_{\epsilon}\subset K(f_{n})$ such that $d(z,\hat z)<\epsilon$.  
\end{proof}

\begin{proof}[Proof of Corollary \ref{COR:MAIN}]
We prove the first part of the corollary.  Suppose first that the image of $\overline{\mathbb D}$ under $q$ is contained in $\overline{\mathbb D}$ and let $0<\epsilon<1$.  By the open mapping theorem, we know $q(\mathbb D)$ is an open set in $\overline{\mathbb D}$, so $q(\mathbb D)\subset\mathbb D$.  Since $\deg q\geq2$, we have from the Denjoy-Wolff Theorem \cite{milnor} that $q$ has a fixed point $z_0$ in $\overline{\mathbb D}$ to which orbits of all points in any compact subset of $\mathbb D$ converge.  We have assumed $q$ has no fixed points in $S_0$, so $z_0\in\mathbb D$, and in this case, we also have from the Denjoy-Wolff Theorem that $z_0$ is the unique fixed point in $\mathbb D$.  From Lemma \ref{LEM:ATTRACT}, we have that for all $\epsilon>0$, there is an $N$ such that for all $n\geq N$, $f_{n}$ has an attracting fixed point $z_n$ with $|z_0-z_n|<\epsilon$ and no other fixed points in $\overline{\mathbb D}_{1-\epsilon}$. 
Thus, we have $\mathbb D_{1-\epsilon}\subset K(f_{n})$.  Combining this with Lemma \ref{LEM:DISKBOUND}, for any $\epsilon>0$, we may choose $N$ large enough such that
\[\mathbb D_{1-\epsilon}\subset K(f_{n})\subset\mathbb D_{1+\epsilon}.\]

Now suppose the image of $\overline{\mathbb D}$ under $q$ is not contained in $\overline{\mathbb D}$, so $q(\overline{\mathbb D})\backslash\overline{\mathbb D}$ is nonempty. 
Since $\overline{\mathbb D}$ is compact, $q(\overline{\mathbb D})$ is also compact, and since $\mathbb C\backslash\overline{\mathbb D}$ is open and $q(\overline{\mathbb D})\backslash\mathbb D$ is nonempty,   
there is some $z_0\in\mathbb D$ and $r>0$ such that $B(z_0,r)\subset\mathbb D$ and for any $z\in B(z_0,r)$, we have $|q(z)|>1$.  Then one can pick $N$ large enough that for any $n\geq N$ and any $z\in B(z_0,r)$, we have $|f_{n}(z)|>1$.  Then for all $n\geq N$, $z_0\notin K(f_n)$.  

For the second part of the theorem, assume $\overline{\mathbb D}$ under $q$ does not intersect $\mathbb D$, so $q(\overline{\mathbb D})\subset(\mathbb C\backslash\mathbb D)$.  That is,  for all $z_0\in\overline{\mathbb D}$, we have $|q(z_0)|\geq1$.  Let 
\[s=\min_{z\in\overline{\mathbb D}_{1-\epsilon}}\{|q(z)|\},\] 
so $1<s$.  Then $(s-1)/2>0$. Since $\overline{\mathbb D}_{1-\epsilon}$ is compact, we may choose this $N$ so that for any $z\in\mathbb D_{1-\epsilon}$,  we have $|z|^n<(s-1)/2$. 
Then for any $z\in\mathbb D_{1-\epsilon}$, we also have
\begin{align*}
    \left|f_{n}(z)\right|
    & \geq\big| |z|^n-|q(z)|\big|\\
    & =|q(z)|-|z|^n\\
    & > s-(s-1)/2\\
    & > 1+(s-1)/2.
\end{align*}
By Lemma \ref{LEM:DISKBOUND}, it follows that $\mathbb D_{1-\epsilon}$ is in the basin of infinity of $f_{n}$ for all $n\geq N$.  The result then follows from this fact and Lemma \ref{LEM:CIRCPREIM}.

Lastly, suppose the image $\overline{\mathbb D}$ under $q$ does intersect $\mathbb D$.  Then by Theorem \ref{THM:HYPERBOLIC}, if the limit exists, $\bigcup_{j>1}S_j\subset\mathbb D$ is nonempty, so the limit cannot be $S_0$.
\end{proof}

\bibliographystyle{plain}
\bibliography{main2}

\end{document}